\newtheorem{Theorem}{Theorem}[section]
\newtheorem{Lemma}[Theorem]{Lemma}
\newtheorem{Proposition}[Theorem]{Proposition}
\theoremstyle{definition}
\newtheorem{Example}[Theorem]{Example}
\theoremstyle{remark}
\newtheorem{Remark}[Theorem]{Remark}
\def\@thmcountersep{-}
\numberwithin{equation}{section}
\begin{document} 

\title{On intrinsically knotted or completely 3-linked graphs}

\author{Ryo Hanaki}
\address{Department of Mathematics, Nara University of Education, Takabatake, Nara 630-8305, Japan}
\email{hanaki@nara-edu.ac.jp}

\author{Ryo Nikkuni}
\address{Department of Mathematics, School of Arts and Sciences, Tokyo Woman's Christian University, 2-6-1 Zempukuji, Suginami-ku, Tokyo 167-8585, Japan}
\email{nick@lab.twcu.ac.jp}
\thanks{The second author was partially supported by Grant-in-Aid for Young Scientists (B) (No. 21740046), Japan Society for the Promotion of Science.}

\author{Kouki Taniyama}
\address{Department of Mathematics, School of Education, Waseda University, Nishi-Waseda 1-6-1, Shinjuku-ku, Tokyo, 169-8050, Japan}
\email{taniyama@waseda.jp}
\thanks{The third author was partially supported by Grant-in-Aid for Scientific Research (C) (No. 21540099), Japan Society for the Promotion of Science.}

\author{Akiko Yamazaki}
\address{Division of Mathematics, Graduate School of Science, Tokyo Woman's Christian University, 2-6-1 Zempukuji, Suginami-ku, Tokyo 167-8585, Japan}
\email{smilebimoch@khc.biglobe.ne.jp}

\subjclass{Primary 57M15; Secondary 57M25}

\date{}


\keywords{Spatial graph, Intrinsic knottedness, $\triangle Y$-exchange, $Y \triangle$-exchange}

\begin{abstract}
We say that a graph is intrinsically knotted or completely $3$-linked if every embedding of the graph into the $3$-sphere contains a nontrivial knot or a $3$-component link any of whose $2$-component sublink is nonsplittable. We show that a graph obtained from the complete graph on seven vertices by a finite sequence of $\triangle Y$-exchanges and $Y \triangle$-exchanges is a minor-minimal intrinsically knotted or completely 3-linked graph. 
\end{abstract}

\maketitle

\section{Introduction} 

Throughout this paper we work in the piecewise linear category. Let $f$ be an embedding of a finite graph $G$ into the $3$-sphere. Then $f$ is called a {\it spatial embedding} of $G$ and $f(G)$ is called a {\it spatial graph}. We denote the set of all spatial embeddings of $G$ by ${\rm SE}(G)$. We call a subgraph $\gamma$ of $G$ which is homeomorphic to the circle a {\it cycle} of $G$. For a positive integer $n$, $\Gamma^{(n)}(G)$ denotes the set of all cycles of $G$ if $n=1$ and the set of all unions of mutually disjoint $n$ cycles of $G$ if $n\ge 2$. In particular, we denote $\Gamma^{(1)}(G)$ by $\Gamma(G)$ simply. For an element $\lambda$ in $\Gamma^{(n)}(G)$ and a spatial embedding $f$ of $G$, $f(\lambda)$ is none other than a knot if $n=1$ and an $n$-component link if $n\ge 2$. 

A graph $G$ is said to be {\it intrinsically linked} (IL) if for every spatial embedding $f$ of $G$, $f(G)$ contains a nonsplittable $2$-component link. Conway-Gordon \cite{CG83} and Sachs \cite{S84} showed that $K_{6}$ is IL, where $K_{m}$ denotes the {\it complete graph} on $m$ vertices. Moreover, IL graphs have been completely characterized as follows. For a graph $G$ and an edge $e$ of $G$, we denote the subgraph $G\setminus {\rm int}e$ by $G-e$. Let $e=\overline{uv}$ is an edge of $G$ which is not a loop. We call the graph which is obtained from $G-e$ by identifying the end vertices $u$ and $v$ the {\it edge contraction of $G$ along $e$} and denote it by $G/e$. A graph $H$ is called a {\it minor} of a graph $G$ if there exists a subgraph $G'$ of $G$ and the edges $e_{1},e_{2},\ldots,e_{m}$ of $G'$ such that $H$ is obtained from $G'$ by a sequence of edge contractions along $e_{1},e_{2},\ldots,e_{m}$. A minor $H$ of $G$ is called a {\it proper minor} if $H$ does not equal $G$. Let ${\mathcal P}$ be a property for graphs which is {\it closed} under minor reductions; that is, for any graph $G$ which does not have ${\mathcal P}$, all minors of $G$ also do not have ${\mathcal P}$. A graph $G$ is said to be {\it minor-minimal} with respect to ${\mathcal P}$ if $G$ has ${\mathcal P}$ but all proper minors of $G$ do not have ${\mathcal P}$. Note that $G$ has ${\mathcal P}$ if and only if $G$ has a minor-minimal graph with respect to ${\mathcal P}$ as a minor. By the famous theorem of Robertson-Seymour \cite{RS04}, there are finitely many minor-minimal graphs with respect to ${\mathcal P}$. Ne\v{s}et\v{r}il-Thomas \cite{NT85} showed that IL is closed under minor reductions, and Robertson-Seymour-Thomas \cite{RST95} showed that the set of all minor-minimal graphs with respect to IL equals the {\it Petersen family} which is the set of all graphs obtained from $K_{6}$ by a finite sequence of {\it $\triangle Y$-exchanges} and {\it $Y \triangle$-exchanges}. Here a $\triangle Y$-exchange is an operation to obtain a new graph $G_{Y}$ from a graph $G_{\triangle}$ by removing all edges of a cycle $\triangle$ of $G_{\triangle}$ with exactly three edges $\overline{uv},\overline{vw}$ and $\overline{wu}$, and adding a new vertex $x$ and connecting it to each of the vertices $u,v$ and $w$ as illustrated in Fig. \ref{Delta-Y} (we often denote $\overline{ux}\cup \overline{vx}\cup \overline{wx}$ by $Y$). A $Y \triangle$-exchange is the reverse of this operation. This family contains exactly seven graphs as illustrated in Fig. \ref{Petersen}, where $G\to G'$ means that $G'$ can be obtained from $G$ by a single $\triangle Y$-exchange. Note that $P_{10}$ is isomorphic to the {\it Petersen graph}. We remark here that if $G_{\triangle}$ is IL then $G_{Y}$ is also IL \cite{MRS88}, and if $G_{Y}$ is IL then $G_{\triangle}$ is also IL \cite{RST95}. Namely $\triangle Y$ and $Y \triangle$-exchanges preserve IL. 

\begin{figure}[htbp]
      \begin{center}
\scalebox{0.45}{\includegraphics*{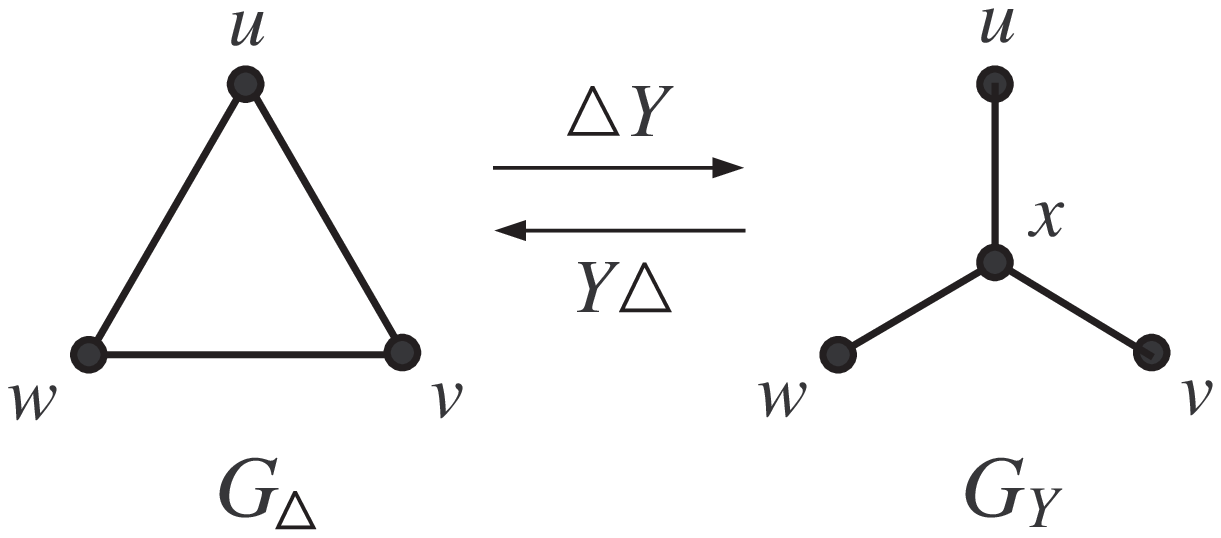}}
      \end{center}
   \caption{}
  \label{Delta-Y}
\end{figure} 
\begin{figure}[htbp]
      \begin{center}
\scalebox{0.45}{\includegraphics*{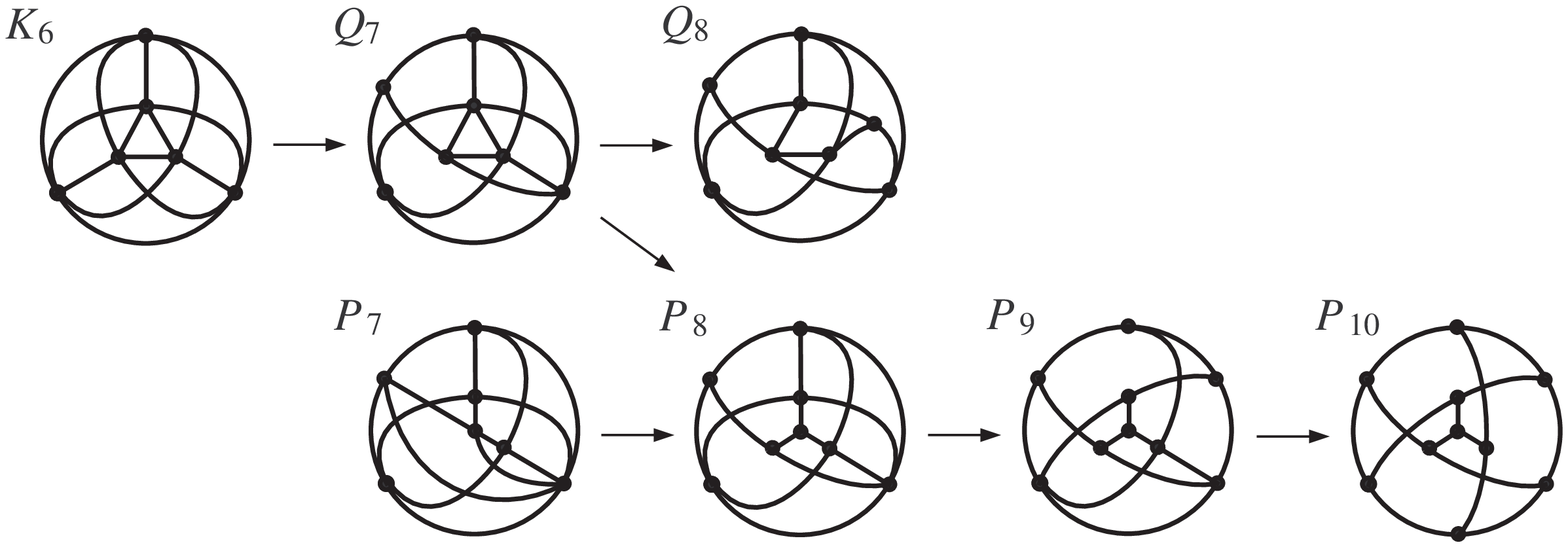}}
      \end{center}
   \caption{}
  \label{Petersen}
\end{figure} 

On the other hand, a graph $G$ is said to be {\it intrinsically knotted} (IK) if for every spatial embedding $f$ of $G$, $f(G)$ contains a nontrivial knot. Conway-Gordon \cite{CG83} showed that $K_{7}$ is IK. Fellows and Langston \cite{FL88} showed that IK is closed under minor reductions, and Motwani-Raghunathan-Saran \cite{MRS88} showed that $K_{7}$ is a minor-minimal IK graph. Although additional minor-minimal IK graphs are known by Kohara-Suzuki \cite{KS92} and Foisy \cite{F02}, \cite{F04}, IK graphs have not been completely characterized yet. We remark here that if $G_{\triangle}$ is IK then $G_{Y}$ is also IK \cite{MRS88}, but if $G_{Y}$ is IK then $G_{\triangle}$ may not always be IK. Namely the $Y \triangle$-exchange does not preserve IK in general. Actually Flapan-Naimi \cite{FN08} exhibited that there exists a graph $G_{FN}$ which is obtained from $K_{7}$ by five times of $\triangle Y$-exchanges and twice $Y \triangle$-exchanges such that it is not IK. We call the set of all graphs obtained from $K_{7}$ by a finite sequence of $\triangle Y$ and $Y \triangle$-exchanges the {\it Heawood family}.\footnote{In \cite{H06}, van der Holst call the set of all graphs obtained from $K_{7}$ or $K_{3,3,1,1}$ by a finite sequence of $\triangle Y$ and $Y \triangle$-exchanges the Heawood family, where $K_{3,3,1,1}$ is the complete $4$-partite graph on $3+3+1+1$ vertices.} This family contains exactly twenty graphs as illustrated in Fig. \ref{Heawood}, where $G\to G'$ means that $G'$ can be obtained from $G$ by a single $\triangle Y$-exchange. Note that $C_{14}$ is isomorphic to the {\it Heawood graph}, see Remark \ref{Hea}. 

\begin{figure}[htbp]
      \begin{center}
\scalebox{0.435}{\includegraphics*{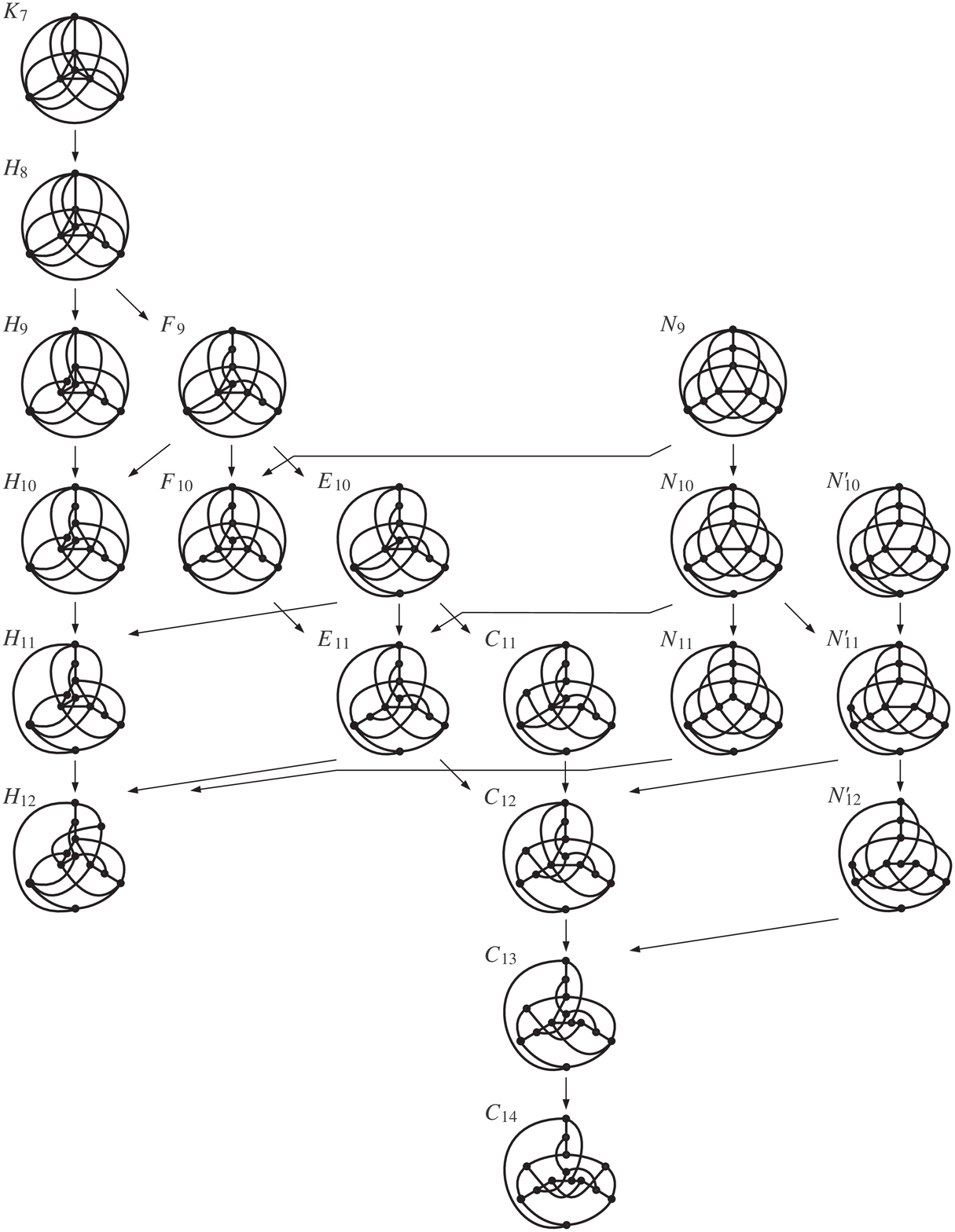}}
      \end{center}
   \caption{}
  \label{Heawood}
\end{figure} 

Kohara-Suzuki \cite{KS92} showed that a graph $G$ in the Heawood family is a minor-minimal IK graph if $G$ is obtained from $K_{7}$ by a finite sequence of $\triangle Y$-exchanges, namely $G$ is one of fourteen graphs $K_{7}$, $H_{8},H_{9},\ldots,H_{12}$, $F_{9},F_{10}$, $E_{10},E_{11}$ and $C_{11},C_{12},\ldots,C_{14}$.\footnote{We remark that one edge of $F_{10}$ in \cite[Fig. 5]{KS92} is wanting.} On the other hand, $N'_{10}$ is isomorphic to $G_{FN}$, namely $N'_{10}$ is not IK. Our first purpose in this paper is to determine completely when a graph in the Heawood family is IK as follows. 

\begin{Theorem}\label{main1} 
Let $G$ be a graph in the Heawood family. Then the following are equivalent: 
\begin{enumerate}
\item $G$ is IK, 
\item $G$ is obtained from $K_{7}$ by a finite sequence of $\triangle Y$-exchanges,
\item $\Gamma^{(3)}(G)$ is the empty set. 
\end{enumerate}
\end{Theorem}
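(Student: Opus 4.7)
The strategy is to establish the equivalences by proving $(2) \Leftrightarrow (3)$ and then $(1) \Leftrightarrow (2)$ separately. The easy direction $(2) \Rightarrow (1)$ is immediate from the results quoted in the introduction: $K_{7}$ is IK by Conway--Gordon \cite{CG83}, and $\triangle Y$-exchange preserves IK by Motwani--Raghunathan--Saran \cite{MRS88}, so every graph in class (2) is IK. (This also appears as part of Kohara--Suzuki \cite{KS92}.)

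For $(2) \Rightarrow (3)$, I would first establish the auxiliary lemma that $\triangle Y$-exchange preserves the condition $\Gamma^{(3)}(G)=\emptyset$. Given three vertex-disjoint cycles $c_{1},c_{2},c_{3}$ in $G_{Y}$, at most one, say $c_{1}$, contains the newly introduced trivalent vertex $x$; being a cycle through $x$, this $c_{1}$ then uses exactly two of the edges $\overline{ux},\overline{vx},\overline{wx}$, say $\overline{ux}$ and $\overline{vx}$. Replacing these two edges of $c_{1}$ by the triangle edge $\overline{uv}$ of $G_{\triangle}$ produces a cycle $c_{1}'$ in $G_{\triangle}$ vertex-disjoint from $c_{2}$ and $c_{3}$, contradicting $\Gamma^{(3)}(G_{\triangle})=\emptyset$. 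Since $K_{7}$ has only seven vertices whereas three disjoint cycles require at least nine, $\Gamma^{(3)}(K_{7})=\emptyset$ trivially, and the lemma propagates the condition to every graph in class (2). For the converse $(3) \Rightarrow (2)$, I would verify by direct inspection of Fig.~\ref{Heawood} that each of the six graphs in the Heawood family requiring at least one $Y\triangle$-exchange to reach from $K_{7}$ contains three mutually vertex-disjoint cycles.

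The remaining and hardest step is $(1) \Rightarrow (2)$: every IK graph in the Heawood family lies in class (2), or equivalently, each of the six ``bad'' graphs is not IK. Flapan--Naimi \cite{FN08} have already shown that $N'_{10}\cong G_{FN}$ is not IK. The principal tool for the remaining five is the contrapositive of \cite{MRS88}: $Y\triangle$-exchange preserves non-IK, so any bad graph reachable from $N'_{10}$ by a chain of $Y\triangle$-exchanges within the Heawood family is thereby also not IK. Since $\triangle Y$-exchange does not in general preserve non-IK, any bad graph not so reachable from $N'_{10}$ would require the explicit construction of a spatial embedding with no knotted cycle, verified cycle by cycle. This last task is the chief technical obstacle of the proof; the lemma of step 2 and the combinatorial inspections of $\Gamma^{(3)}$ are routine by comparison.
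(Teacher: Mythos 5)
Your handling of $(2)\Rightarrow(1)$ and of the equivalence $(2)\Leftrightarrow(3)$ is correct and coincides with the paper's: the auxiliary lemma that a $\triangle Y$-exchange preserves the condition $\Gamma^{(3)}=\emptyset$ is exactly the surjectivity of $\Phi^{(3)}$ recorded as Proposition \ref{main1lem2}, and the converse direction is the same inspection of the six exceptional graphs.

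The gap is in $(1)\Rightarrow(2)$, and it is the substantive part of the theorem. Your ``principal tool'' --- the contrapositive of \cite{MRS88}, i.e.\ that a $Y\triangle$-exchange preserves non-IK --- is true but propagates non-knottedness only in the direction of \emph{decreasing} vertex number, from $G_Y$ to $G_\triangle$. The graphs $N_{10}$, $N_{11}$, $N'_{11}$ and $N'_{12}$ all have at least as many vertices as $N'_{10}$, so none of them can be reached from $N'_{10}$ by a chain of $Y\triangle$-exchanges; they arise from $N_{9}$ or $N'_{10}$ by $\triangle Y$-exchanges, which is precisely the direction in which non-IK fails to be preserved (that failure is the whole content of the Flapan--Naimi example \cite{FN08}). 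Hence your tool yields nothing beyond $N'_{10}$ itself, and all five of $N_{9},N_{10},N_{11},N'_{11},N'_{12}$ land in the case you defer as ``the chief technical obstacle''; deferring it is deferring the proof. Moreover, the cycle-by-cycle verification you envisage for five separate $21$-edge graphs is avoidable, and seeing why is the missing idea. The paper's Lemma \ref{main1lem} verifies only one embedding directly, namely $f_{9}$ of the smallest graph $N_{9}$. For each graph one $\triangle Y$-exchange higher it then chooses a specific embedding in which the image of the new $Y$ lies in a disk meeting the rest of the spatial graph only in the three attaching vertices, so that the induced map $\varphi$ on spatial embeddings carries it to the already-verified embedding one step below; Proposition \ref{map} then says every cycle of the larger graph is embedded ambient isotopically to the image of a cycle of the smaller one, and knotlessness transfers with no further checking. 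In other words, although a $\triangle Y$-exchange does not preserve non-IK in general, a \emph{particular} knotless embedding of $G_{\triangle}$ can be lifted to a knotless embedding of $G_{Y}$ whenever it is realized as $\varphi$ of something, and the paper arranges exactly that along the chains above $N_{9}$ and $N'_{10}$.
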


Namely, each of the graphs $N_{9},N_{10},N_{11},N'_{10},N'_{11}$ and $N'_{12}$ in the Heawood family is not IK, and only these graphs in the Heawood family contain a union of mutually disjoint three cycles. Our second purpose in this paper is to show that any of the graphs in the Heawood family is a minor-minimal graph with respect to a certain kind of intrinsic nontriviality even if it is not IK. We say that a graph $G$ is {\it intrinsically knotted or completely $3$-linked} (I(K or C3L)) if for every spatial embedding $f$ of $G$, $f(G)$ contains a nontrivial knot or a $3$-component link any of whose $2$-component sublink is nonsplittable. Note that an IK graph is I(K or C3L). As we will show in Proposition \ref{closed}, I(K or C3L) is closed under minor reductions. Then we have the the following. 

\begin{Theorem}\label{main2} 
All of the graphs in the Heawood family are minor-minimal I(K or C3L) graphs. 
\end{Theorem}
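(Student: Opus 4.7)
The plan is induction along the Heawood family, which by definition is generated from $K_7$ by a sequence of $\triangle Y$- and $Y\triangle$-exchanges. Beyond Proposition \ref{closed}, I would need three ingredients: (i) a base case that $K_7$ is minor-minimal I(K or C3L); (ii) a preservation lemma that both $\triangle Y$- and $Y\triangle$-exchanges preserve I(K or C3L); and (iii) a propagation lemma for minor-minimality under these exchanges.

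For the preservation lemma, I would follow the standard playbook used for IL. Given an embedding $g$ of $G_Y$, construct $f\colon G_\triangle\to S^3$ by deleting the tripod $Y=\overline{ux}\cup\overline{vx}\cup\overline{wx}$ from $g(G_Y)$ inside a small ball $B$ containing $g(Y)$ and inserting in its place the three triangle edges $\overline{uv},\overline{vw},\overline{uw}$. To each cycle $\gamma$ of $G_\triangle$ I associate a cycle $\gamma'$ of $G_Y$: keep $\gamma'=\gamma$ if $\gamma$ uses no triangle edge; replace a single triangle edge $\overline{uv}$ by the $Y$-path $\overline{ux}\cup\overline{xv}$ if $\gamma$ uses one triangle edge; and replace a two-edge triangle path through a middle vertex $w$ by the single $Y$-path $\overline{ux}\cup\overline{xv}$, bypassing $w$, if $\gamma$ uses two triangle edges. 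Since the cycles in any bad $3$-sublink that do not use triangle edges must avoid $\{u,v,w\}$ and hence do not enter $B$, the arcs inside $B$ are isotopic rel $\partial B$ and all knot/link types transport exactly. The remaining case $\gamma=\triangle$ is excluded from any bad structure: $\triangle$ is always unknotted, and a $2$-sublink containing $\triangle$ is split off by $\partial B$, contradicting either nontriviality or the completely-$3$-linked property. This gives preservation of I(K or C3L) in both directions.

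For the base case, $K_7$ is IK and hence I(K or C3L). Any proper minor $H$ of $K_7$ has at most $7$ vertices, and three pairwise vertex-disjoint cycles require at least $9$ vertices, so $\Gamma^{(3)}(H)=\emptyset$. For such $H$ the property I(K or C3L) is equivalent to IK; since $K_7$ is minor-minimal IK, no proper minor of $K_7$ is I(K or C3L).

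For the propagation lemma, suppose $G_\triangle$ is minor-minimal I(K or C3L) and $G_Y$ arises by a $\triangle Y$-exchange at the new vertex $x$. Then $G_Y$ is I(K or C3L) by (ii). Let $H$ be a proper minor of $G_Y$. If no edge of $Y$ is operated on, then $H$ is the $\triangle Y$-exchange of a proper minor $H_\triangle$ of $G_\triangle$, and (ii) transports ``not I(K or C3L)'' from $H_\triangle$ to $H$. If some edge of $Y$, say $\overline{ux}$, is deleted, suppressing the resulting degree-$2$ vertex $x$ identifies $G_Y-\overline{ux}$ with $G_\triangle-\overline{uv}-\overline{uw}$, a proper minor of $G_\triangle$, and Proposition \ref{closed} forces $H$ not I(K or C3L). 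If $\overline{ux}$ is contracted, $G_Y/\overline{ux}$ equals $G_\triangle-\overline{vw}$, again a proper minor, and the same argument works. The $Y\triangle$ direction is symmetric. The hardest step is the two-triangle-edge case of the preservation lemma -- the isotopy check inside $B$ in the presence of the dangling $Y$-edge at the bypassed middle vertex -- together with the careful exclusion of $\gamma=\triangle$, which crucially uses that a completely-$3$-linked link has \emph{every} $2$-sublink nonsplittable.
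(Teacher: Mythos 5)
Your induction hinges on the claim that the $Y\triangle$-exchange preserves I(K or C3L), and this is precisely where the argument breaks down. The ball-and-disk construction you describe takes an embedding $g$ of $G_Y$ and produces an embedding of $G_\triangle$ whose cycles (other than $\triangle$ itself) realize the same knot and link types; this proves only one implication, namely that if $G_\triangle$ is I(K or C3L) then so is $G_Y$ (Lemma \ref{DY} of the paper). The sentence ``this gives preservation in both directions'' has nothing behind it: to handle a $Y\triangle$-exchange you would need to start from a bad embedding of $G_\triangle$ and produce a bad embedding of $G_Y$, and there is no canonical way to do this because $f(\triangle)$ need not bound a disk disjoint from the rest of $f(G_\triangle)$ on which to cone off a $Y$. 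Indeed, the analogous statement for IK is \emph{false} (Flapan--Naimi, cited in the paper: $N'_{10}$ is reached from IK graphs by $Y\triangle$-exchanges yet is not IK), so no symmetry argument can work. Without the $Y\triangle$ direction your induction never reaches the six graphs $N_{9},N_{10},N_{11},N'_{10},N'_{11},N'_{12}$, which are exactly the members of the family that cannot be obtained from $K_7$ by $\triangle Y$-exchanges alone. The paper fills this hole with Theorem \ref{N9FN}, a lengthy Conway--Gordon-type computation (using $D_4$-minors, linking numbers mod $2$ and $a_2$ mod $2$) showing directly that every embedding of $N_9$ or $N'_{10}$ contains a knot with odd $a_2$ or a $3$-component link all of whose $2$-component sublinks have odd linking number; since every Heawood-family graph is obtained from $K_7$, $N_9$ or $N'_{10}$ by $\triangle Y$-exchanges \emph{only}, Lemma \ref{DY} then yields I(K or C3L) for the whole family.

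The same gap infects your minor-minimality propagation: in the step where no edge of $Y$ is touched, you need ``$H_\triangle$ not I(K or C3L) implies $(H_\triangle)_Y$ not I(K or C3L),'' which is again the contrapositive of the unproved $Y\triangle$ direction. The paper runs minor-minimality the other way (Lemma \ref{minor-min-lem}: if $G_Y$ is minor-minimal then so is $G_\triangle$), starting from $H_{12}$ and $C_{14}$ --- the two graphs from which every member of the family is reachable by $Y\triangle$-exchanges --- whose minor-minimality follows from Kohara--Suzuki's result that they are minor-minimal IK together with $\Gamma^{(3)}=\emptyset$, so that for them and all their minors I(K or C3L) coincides with IK. Your base case at $K_7$ and your treatment of the $\triangle Y$ preservation are essentially the paper's, but as written the proposal does not prove the theorem.
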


Actually, each of the graphs $N_{9},N_{10},N_{11},N'_{10},N'_{11}$ and $N'_{12}$ in the Heawood family is not IK but I(K or C3L), and they are minor-minimal with respect to I(K or C3L). 

\begin{Remark}\label{rem} 
\begin{enumerate}
\item A graph $G$ is said to be {\it intrinsically $n$-linked} (I$n$L) if for every spatial embedding $f$ of $G$, $f(G)$ contains a nonsplittable $n$-component link \cite{FNP01} \cite{FPFN01}. Note that I$2$L coincides with IL. Let $G$ be a graph in the Heawood family which is not IK. Then we will show in Example \ref{not3L} that there exists a spatial embedding $f$ of $G$ such that $f(G)$ does not contain a nonsplittable $3$-component link. Namely $G$ is neither IK nor I$3$L. 
\item A graph $G$ is said to be {\it intrinsically knotted or $3$-linked} (I(K or 3L)) if for every spatial embedding $f$ of $G$, $f(G)$ contains a nontrivial knot or a nonsplittable $3$-component link \cite{F06}. It is clear that I(K or C3L) implies I(K or 3L), but the converse is not true. Actually in \cite{F06}, although Foisy discovered an I(K or 3L) graph $G$ and exhibit a spatial embedding $f$ of $G$ such that $f(G)$ contains a nonsplittable $3$-component link but does not contain a nontrivial knot, each of the nonsplittable $3$-component links in $f(G)$ contains a split $2$-component sublink. 
\end{enumerate}
\end{Remark}

The rest of this paper is organized as follows. In the next section, we show the general results about graph minors, $\triangle Y$-exchanges and spatial graphs. We prove Theorems \ref{main1} and \ref{main2} in sections $3$ and $4$, respectively. 

\section{Graph minors, $\triangle Y$-exchanges and spatial graphs} 

Let $H$ be a minor of a graph $G$. Then there exists a natural injection 
\begin{eqnarray*}
\Psi^{(n)}=\Psi_{H,G}^{(n)}:\Gamma^{(n)}(H)\longrightarrow \Gamma^{(n)}(G)
\end{eqnarray*}
for any positive integer $n$. In particular, we denote $\Psi^{(1)}$ by $\Psi$ simply. Let $f$ be a spatial embedding of $G$ and $e$ an edge of $G$ which is not a loop. Then by contracting $f(e)$ into one point, we obtain a spatial embedding $\psi(f)$ of $G/e$. Similarly we also can obtain a spatial embedding $\psi(f)$ of $H$ from $f$. Thus we obtain a map 
\begin{eqnarray*}
\psi=\psi_{G,H}:{\rm SE}(G)\longrightarrow {\rm SE}(H). 
\end{eqnarray*}
Then we immediately have the following. 

\begin{Proposition}\label{map2} 
For a spatial embedding $f$ of $G$ and an element $\lambda$ in $\Gamma^{(n)}(H)$, $\psi(f)(\lambda)$ is ambient isotopic to $f\left(\Psi^{(n)}(\lambda)\right)$. \hfill $\square$
\end{Proposition}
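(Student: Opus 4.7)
The plan is to reduce the proposition to the two elementary moves that produce $H$ from $G$, namely passage to a subgraph and contraction of a single non-loop edge, and then to iterate. Since $\psi$ and $\Psi^{(n)}$ are functorial under composition of such moves, the general statement follows from the elementary cases by induction on the length of the sequence, composing the intermediate ambient isotopies.

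Passage to a subgraph is essentially tautological: if $G'$ is a subgraph of $G$ then $\psi(f) = f|_{G'}$ and $\Psi^{(n)}$ is the inclusion $\Gamma^{(n)}(G') \hookrightarrow \Gamma^{(n)}(G)$, so $\psi(f)(\lambda) = f(\Psi^{(n)}(\lambda))$ literally, as point-sets in $S^3$.

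The substantive step is a single edge contraction $H = G/e$, where $e = \overline{uv}$. I would realize the collapse of $f(e)$ to a point by a PL ambient isotopy of $S^3$: pick a regular neighborhood $N$ of $f(e)$ meeting $f(G)$ only in $f(e)$, and a PL ambient isotopy $\{h_t\}_{t\in[0,1]}$ of $S^3$ supported in $N$ with $h_1(f(e))$ equal to a single point. Then $h_1\circ f$ descends through the quotient $G\to G/e$ to give exactly $\psi(f)$, up to the (isotopic) choice of a PL identification $S^3/f(e)\cong S^3$. By definition of $\Psi^{(n)}$, the subgraph $\Psi^{(n)}(\lambda)\subset G$ agrees with $\lambda$ except possibly at the (unique, if any) component of $\lambda$ through the contracted vertex $w$, where one must reinsert $e$ precisely when the two edges of that component at $w$ lift to edges of $G$ incident to distinct endpoints of $e$; in every case, $h_1$ carries $f(\Psi^{(n)}(\lambda))$ onto $\psi(f)(\lambda)$, delivering the required ambient isotopy. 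The only ingredient that is not purely formal is the existence of the isotopy $\{h_t\}$ collapsing a PL arc in $S^3$ to an endpoint inside a prescribed regular neighborhood, but this is standard in the PL category, so I anticipate no real obstacle and view the whole proof as a routine unwinding of the definitions of $\psi$ and $\Psi^{(n)}$.
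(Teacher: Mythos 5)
The paper gives no argument for this proposition at all---it is asserted as immediate from the constructions of $\psi$ and $\Psi^{(n)}$---so you are supplying details the authors suppress, and your overall strategy (the subgraph case is a tautology; reduce the rest to a single non-loop edge contraction and induct, composing the intermediate isotopies) is exactly the intended reading. Your bookkeeping for $\Psi^{(n)}$ is also right: at most one component of $\lambda$ meets the contracted vertex, and $e$ is reinserted precisely when the two edges of that component at $w$ lift to edges incident to distinct endpoints of $e$.

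There is, however, one technical misstatement in the substantive step: no ambient isotopy $\{h_t\}$ of $S^3$ can have $h_1(f(e))$ equal to a single point, since $h_1$ is a homeomorphism and must carry the arc $f(e)$ to an arc. The standard repair is to work inside the regular neighborhood directly. Let $B$ be a regular neighborhood of $f(e)$ in the pair $(S^3,f(G))$, so that $f(G)\cap B$ consists of $f(e)$ together with initial germs of the other edges at $f(u)$ and $f(v)$, meeting $\partial B$ in finitely many points; define $\psi(f)$ by replacing $f(G)\cap B$ with the cone from an interior point of $B$ on $f(G)\cap\partial B$. Then $\psi(f)(\lambda)$ and $f\bigl(\Psi^{(n)}(\lambda)\bigr)$ coincide outside $B$, and inside $B$ each is either empty or a single arc with the same two endpoints on $\partial B$, both arcs being trivial (boundary-parallel) in $B$ because each lies in a collapsible $1$-complex collapsing to a point of $B$; hence they are ambient isotopic by an isotopy supported in $B$, which in particular does not disturb the other components of $\lambda$. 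Alternatively one can make your quotient argument honest by citing that the map $S^3\to S^3/f(e)$ collapsing a tame arc is approximable by homeomorphisms, which is what your parenthetical about the PL identification is gesturing at. With either adjustment your proof is complete.
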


Now we show the following. 

\begin{Proposition}\label{closed} 
I(K or C3L) is closed under minor reductions. 
\end{Proposition}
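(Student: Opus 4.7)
The plan is to prove the (equivalent) contrapositive: if $H$ is a minor of a graph $G$ and $H$ is I(K or C3L), then $G$ is I(K or C3L). This should follow essentially directly from Proposition \ref{map2} together with the functoriality of the construction $\psi$ under sequences of edge deletions and edge contractions.

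Concretely, I would fix an arbitrary spatial embedding $f \in {\rm SE}(G)$ and pass to $\psi(f) = \psi_{G,H}(f) \in {\rm SE}(H)$, which is well-defined because $H$ is a minor of $G$. By the assumption that $H$ is I(K or C3L), the spatial graph $\psi(f)(H)$ contains either a nontrivial knot or a $3$-component link each of whose $2$-component sublinks is nonsplittable. In the first case, there exists $\gamma \in \Gamma(H)$ such that $\psi(f)(\gamma)$ is a nontrivial knot; by Proposition \ref{map2}, $f(\Psi(\gamma))$ is ambient isotopic to $\psi(f)(\gamma)$, and since $\Psi(\gamma) \in \Gamma(G)$, we conclude that $f(G)$ contains a nontrivial knot. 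In the second case, there exists $\lambda \in \Gamma^{(3)}(H)$ such that $\psi(f)(\lambda)$ is a completely $3$-linked $3$-component link; Proposition \ref{map2} again gives that $f(\Psi^{(3)}(\lambda))$ is ambient isotopic to $\psi(f)(\lambda)$, and since ambient isotopy preserves both the link type of the whole and of every sublink, $f(\Psi^{(3)}(\lambda))$ is a completely $3$-linked $3$-component link contained in $f(G)$.

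In either case the arbitrary embedding $f$ of $G$ is shown to contain the required obstruction, so $G$ is I(K or C3L), completing the proof.

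There is no real obstacle here; the proof is a transfer argument. The only point that deserves a moment of care is that the property defining ``completely $3$-linked'' is stated on every $2$-component sublink simultaneously, so one must observe that the ambient isotopy supplied by Proposition \ref{map2} acts on the full image $f(\Psi^{(3)}(\lambda))$ and hence carries the splittability (resp.\ nonsplittability) of each $2$-component sublink to that of the corresponding sublink of $\psi(f)(\lambda)$. This is immediate from the fact that an ambient isotopy of $S^{3}$ restricts to an ambient isotopy on any union of components.
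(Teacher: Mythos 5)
Your proof is correct and is essentially the paper's own argument run in the contrapositive direction: both reduce to the transfer statement of Proposition \ref{map2}, namely that $f(\Psi^{(n)}(\lambda))$ and $\psi(f)(\lambda)$ are ambient isotopic, so an obstruction in $\psi(f)(H)$ corresponds exactly to one in $f(G)$. The paper simply phrases it as ``$f(G)$ has no obstruction $\Rightarrow$ $\psi(f)(H)$ has no obstruction,'' which is the same use of the same lemma.
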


\begin{proof}
Let $G$ be a graph which is not I(K or C3L) and $H$ a minor of $G$. Let $f$ be a spatial embedding of $G$ which contains neither a nontrivial knot nor a $3$-component link any of whose $2$-component sublink is nonsplittable. Then by Proposition \ref{map2}, $\psi(f)$ also contains neither a nontrivial knot nor a $3$-component link any of whose $2$-component sublink is nonsplittable. This implies that $H$ is not I(K or C3L). 
\end{proof}

\begin{Remark}\label{closed2} 
Proposition \ref{map2} also implies that IK, I$n$L and I(K or 3L) are closed under minor reductions. 
\end{Remark}

Let $G_{\triangle}$ and $G_{Y}$ be two graphs such that $G_{Y}$ is obtained from $G_{\triangle}$ by a single $\triangle Y$-exchange as we said in the previous section. Let $\lambda$ be an element in $\Gamma^{(n)}(G_{\triangle})$ which does not contain $\triangle$. Then there exists an element $\Phi^{(n)}(\lambda)$ in $\Gamma^{(n)}(G_{Y})$ such that $\lambda\setminus \triangle=\Phi^{(n)}(\lambda)\setminus Y$. Thus we obtain a map 
\begin{eqnarray*}
\Phi^{(n)}=\Phi_{G_{\triangle},G_{Y}}^{(n)}:\left\{\lambda\in\Gamma^{(n)}(G_{\triangle})\ |\ \lambda\not\supset \triangle\right\}\longrightarrow \Gamma^{(n)}(G_{Y})
\end{eqnarray*}
for any positive integer $n$. In particular, we denote $\Phi^{(1)}$ by $\Phi$ simply. Note that $\Phi^{(n)}$ is surjective and the inverse image of $\lambda$ by $\Phi^{(n)}$ contains at most two elements in $\Gamma^{(n)}(G_{\triangle})$ for any element $\lambda$ in $\Gamma^{(n)}(G_{Y})$. Note also that the surjectivity of $\Phi^{(n)}$ implies the following. 

\begin{Proposition}\label{main1lem2} 
For $n\ge 2$, if $\Gamma^{(n)}(G_{\triangle})$ is the empty set, then $\Gamma^{(n)}(G_{Y})$ is also the empty set. \hfill $\square$
\end{Proposition}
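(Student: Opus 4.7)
The plan is to read Proposition \ref{main1lem2} as an immediate consequence of the surjectivity of $\Phi^{(n)}$ asserted in the paragraph just before the statement. The first step is to observe that the domain of $\Phi^{(n)}$, namely $\{\lambda\in\Gamma^{(n)}(G_{\triangle})\ |\ \lambda\not\supset \triangle\}$, is a subset of $\Gamma^{(n)}(G_{\triangle})$. So if $\Gamma^{(n)}(G_{\triangle})=\emptyset$, this domain is also empty. Since a surjection from the empty set must have empty codomain, $\Gamma^{(n)}(G_{Y})=\emptyset$ follows.

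The only content of the argument is the surjectivity of $\Phi^{(n)}$ itself, which I would verify (if it were not taken for granted) by case analysis on an arbitrary $\mu\in\Gamma^{(n)}(G_{Y})$. If no cycle of $\mu$ passes through the trivalent vertex $x$ of $Y$, then $\mu$ is disjoint from $Y$ and lies unchanged in $\Gamma^{(n)}(G_{\triangle})$, with $\mu\not\supset\triangle$ and $\Phi^{(n)}(\mu)=\mu$. Otherwise exactly one cycle of $\mu$ passes through $x$ and, since $x$ has degree three in $G_{Y}$, uses precisely two of the three edges $\overline{ux},\overline{vx},\overline{wx}$; replacing the length-two path through $x$, say $u\text{-}x\text{-}v$, by the single edge $\overline{uv}$ of $\triangle$ produces an element of $\Gamma^{(n)}(G_{\triangle})$ whose complement of $\triangle$ coincides with $\mu\setminus Y$, providing a preimage under $\Phi^{(n)}$.

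There is no real obstacle here: once the structural fact that $\Phi^{(n)}$ is surjective is in hand, the proposition is a set-theoretic triviality. This is presumably why the authors mark it with $\square$ in the statement.
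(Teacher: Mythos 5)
Your argument is correct and is exactly the paper's: the proposition is stated with a terminal $\square$ precisely because it is the set-theoretic consequence of the surjectivity of $\Phi^{(n)}$ noted in the preceding paragraph. Your additional verification of that surjectivity (the degree-three vertex $x$ forces any cycle through $x$ to use exactly two edges of $Y$, which can be replaced by one edge of $\triangle$) is sound and fills in what the paper takes for granted.
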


Let $f$ be a spatial embedding of $G_{Y}$ and $D$ a $2$-disk in the $3$-sphere such that $D\cap f(G_{Y})=f(Y)$ and $\partial D \cap f(G_{Y}) = \{f(u),f(v),f(w)\}$. Let $\varphi(f)$ a spatial embedding of $G_{\triangle}$ such that $\varphi(f)(x)=f(x)$ for $x\in G_{Y}\setminus Y$ and $\varphi(f)(G_{\triangle})=\left(f(G_{Y})\setminus f(Y)\right)\cup \partial D$. Thus we obtain a map 
\begin{eqnarray*}
\varphi=\varphi_{G_{Y},G_{\triangle}}:{\rm SE}(G_{Y})\longrightarrow {\rm SE}(G_{\triangle}). 
\end{eqnarray*}
Then we immediately have the following. 

\begin{Proposition}\label{map} 
For a spatial embedding $f$ of $G_{Y}$ and an element $\lambda$ in $\Gamma^{(n)}(G_{Y})$, $f(\lambda)$ is ambient isotopic to $\varphi(f)(\lambda')$ for each element $\lambda'$ in the inverse image of $\lambda$ by $\Phi^{(n)}$. \hfill $\square$
\end{Proposition}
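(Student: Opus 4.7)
The plan is to proceed by case analysis on whether $\lambda$ meets the new $Y$-vertex $x$, and to realize the required ambient isotopy as a ``disk move'' across a subdisk of $D$.

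First, if $\lambda \cap Y = \emptyset$, a degree count at $u,v,w$ shows that any preimage $\lambda' \in (\Phi^{(n)})^{-1}(\lambda)$ must satisfy $\lambda' \cap \triangle = \emptyset$: a single edge of $\triangle$ in $\lambda'$ would leave two of $u,v,w$ with odd degree, while two edges would force one vertex of $\lambda' \setminus \triangle = \lambda$ to have degree one, contradicting $\lambda \in \Gamma^{(n)}(G_Y)$. Hence $\lambda' = \lambda$ as subgraphs of $G_\triangle \setminus \triangle = G_Y \setminus Y$, and since $\varphi(f)$ agrees with $f$ on this common subgraph by construction, $\varphi(f)(\lambda') = f(\lambda)$ literally as subsets of $S^3$.

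Next, if $\lambda$ contains $x$, then $\lambda \cap Y$ consists of exactly two edges of $Y$; without loss of generality $\lambda \cap Y = \overline{ux} \cup \overline{xv}$. A similar degree analysis at $u,v,w$ shows that $\lambda' \cap \triangle$ is either (A) the single edge $\overline{uv}$, always available, or (B) the two-edge path $\overline{uw} \cup \overline{wv}$, available precisely when $w$ is not a vertex of $\lambda$. Writing $\alpha = f(\overline{ux}) \cup f(\overline{xv}) \subset f(Y)$ and $\beta = \varphi(f)(\lambda' \cap \triangle) \subset \partial D$, both arcs join $f(u)$ to $f(v)$, and we have $f(\lambda) = f(\lambda \setminus Y) \cup \alpha$ and $\varphi(f)(\lambda') = f(\lambda \setminus Y) \cup \beta$. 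Since $\alpha \cup \beta \subset D$, it bounds a subdisk $D^* \subset D$: in case (A) the ``pie slice'' of $D$ cut off by $f(\overline{ux}) \cup f(\overline{xv})$; in case (B) the complementary disk, whose interior contains the third $Y$-edge $f(\overline{xw})$ and the vertex $f(w)$.

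The main step is then a standard disk move: an ambient isotopy of $S^3$, supported in a regular neighborhood of $D^*$ and the identity on a small neighborhood of $\{f(u), f(v)\}$ off $D^*$, that slides $\alpha$ across $D^*$ onto $\beta$ while fixing $f(\lambda \setminus Y)$. The technical content is to check that $\mathrm{int}(D^*)$ is disjoint from $f(\lambda \setminus Y) = \varphi(f)(\lambda' \setminus \triangle)$, so the isotopy does not disturb the rest of either link. By hypothesis $D \cap f(G_Y) = f(Y)$, so $f(\lambda \setminus Y) \cap D \subset \{f(u), f(v), f(w)\}$. In case (A), $\mathrm{int}(D^*)$ is disjoint from $f(Y)$ entirely and the disjointness is immediate. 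In case (B), $\mathrm{int}(D^*)$ meets $f(Y)$ along the edge $f(\overline{xw})$, but this edge lies in neither $f(\lambda)$ (since $\lambda \cap Y$ uses only the other two $Y$-edges) nor $\varphi(f)(\lambda')$ (since $G_\triangle$ has no edge $\overline{xw}$), and $f(w) \notin f(\lambda \setminus Y)$ precisely because case (B) requires $w \notin \lambda$. This last point, the disk move in case (B), is the main obstacle: one must confirm that the ``extra'' $Y$-edge in $\mathrm{int}(D^*)$ does not obstruct sliding $\alpha$ to $\beta$, and the degree condition built into case (B) is exactly what guarantees it.
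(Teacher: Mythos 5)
Your argument is correct and is precisely the justification the paper leaves implicit: Proposition \ref{map} is stated without proof (``Then we immediately have the following''), the intended content being exactly your case analysis together with the disk slide of $f(\overline{ux})\cup f(\overline{xv})$ across a subdisk $D^{*}\subset D$ onto the corresponding arc of $\partial D$, using $D\cap f(G_{Y})=f(Y)$ to see that ${\rm int}(D^{*})$ misses $f(\lambda\setminus Y)$. The only cosmetic slip is that in case (B) the point $f(w)$ lies on $\partial D^{*}$ (it is in $\beta\subset\partial D$) rather than in ${\rm int}(D^{*})$, which does not affect the argument.
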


Now we show the following lemmas. 

\begin{Lemma}\label{DY} 
If $G_{\triangle}$ is I(K or C3L), then $G_{Y}$ is also I(K or C3L). 
\end{Lemma}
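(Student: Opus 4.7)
The plan is to transfer the intrinsic property from $G_\triangle$ to $G_Y$ via the map $\varphi = \varphi_{G_Y,G_\triangle}$, using Proposition~\ref{map} to identify the resulting knots and links. Concretely, given any spatial embedding $f$ of $G_Y$, I would form $\varphi(f) \in \mathrm{SE}(G_\triangle)$ together with its defining disk $D$. Since $G_\triangle$ is I(K or C3L), $\varphi(f)(G_\triangle)$ contains either a nontrivial knot $\varphi(f)(\mu)$ for some $\mu \in \Gamma(G_\triangle)$, or a $3$-component link $\varphi(f)(\lambda)$ for some $\lambda \in \Gamma^{(3)}(G_\triangle)$ each of whose $2$-component sublinks is nonsplittable. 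In both cases the goal is to show that the witnessing element of $\Gamma^{(n)}(G_\triangle)$ does not contain $\triangle$, so that its image under $\Phi^{(n)}$ is a well-defined cycle (or union of cycles) in $G_Y$ that, by Proposition~\ref{map}, realizes the same knot/link type under $f$.

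In the knot case, because $\triangle$ is itself a $3$-cycle, the only cycle of $G_\triangle$ containing $\triangle$ as a subgraph is $\triangle$ itself, and $\varphi(f)(\triangle) = \partial D$ bounds the disk $D$ and so is trivial. Hence $\mu \neq \triangle$, i.e.\ $\mu \not\supset \triangle$, so $\Phi(\mu) \in \Gamma(G_Y)$ is defined and $f(\Phi(\mu))$ is ambient isotopic to the nontrivial knot $\varphi(f)(\mu)$.

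In the link case, I need to rule out that $\triangle$ appears as one of the three components of $\lambda$. Suppose it did, so $\lambda = \triangle \cup \lambda_1 \cup \lambda_2$ with $\lambda_i \in \Gamma(G_\triangle)$. From $D \cap f(G_Y) = f(Y)$ and $\varphi(f)(G_\triangle) = (f(G_Y) \setminus f(Y)) \cup \partial D$, the interior of $D$ is disjoint from $\varphi(f)(\lambda_i)$ for $i=1,2$. Thus $D$ is a disk bounded by $\varphi(f)(\triangle) = \partial D$ whose interior misses $\varphi(f)(\lambda_i)$, which exhibits the $2$-component sublink $\varphi(f)(\triangle) \cup \varphi(f)(\lambda_i)$ as splittable — contradicting the hypothesis on $\lambda$. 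So $\lambda \not\supset \triangle$, hence $\Phi^{(3)}(\lambda) \in \Gamma^{(3)}(G_Y)$ and Proposition~\ref{map} gives that $f(\Phi^{(3)}(\lambda))$ is ambient isotopic to $\varphi(f)(\lambda)$; since the latter has every $2$-sublink nonsplittable, so does the former.

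The only nonroutine step is the geometric argument in the link case showing that a component equal to $\triangle$ would force a split $2$-sublink. This is the main obstacle, and it rests entirely on the definition of $\varphi$ and the disk $D$: once one observes that $\mathrm{int}(D)$ is disjoint from $\varphi(f)(G_\triangle) \setminus \partial D$, the splitness is immediate. All remaining steps are formal consequences of Proposition~\ref{map} and the surjectivity of $\Phi^{(n)}$.
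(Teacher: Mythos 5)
Your proof is correct and is essentially the paper's argument run in the direct rather than the contrapositive direction: both rest on Proposition~\ref{map} together with the observations that $\varphi(f)(\triangle)=\partial D$ is a trivial knot and that any $3$-component link having $\triangle$ as a component is split because $D$ misses the other components. The paper instead assumes $G_Y$ is not I(K or C3L) and checks every $\gamma\in\Gamma(G_{\triangle})$ and $\lambda\in\Gamma^{(3)}(G_{\triangle})$, but the content is the same and your version has no gap.
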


\begin{proof}
Assume that $G_{Y}$ is not I(K or C3L), namely there exists a spatial embedding $f$ of $G_{Y}$ which contains neither a nontrivial knot nor a $3$-component link any of whose $2$-component sublink is nonsplittable. In the following we show that $\varphi(f)(G_{\triangle})$ also contains neither a nontrivial knot nor a $3$-component link any of whose $2$-component sublink is nonsplittable. Let $\gamma$ be an element in $\Gamma(G_{\triangle})$. If $\gamma$ is not $\triangle$, then $\varphi(f)(\gamma)$ is ambient isotopic to $f(\Phi(\gamma))$ by Proposition \ref{map} and $f(\Phi(\gamma))$ is a trivial knot by the assumption. Since $\varphi(f)(\triangle)$ is also a trivial knot, it follows that $\varphi(f)(G_{\triangle})$ does not contain a nontrivial knot. Let $\lambda$ be an element in $\Gamma^{(3)}(G_{\triangle})$. If $\lambda$ does not contain $\triangle$, then $\varphi(f)(\lambda)$ is ambient isotopic to $f(\Phi^{(3)}(\lambda))$ by Proposition \ref{map} and $f(\Phi^{(3)}(\lambda))$ is a $3$-component link which contains a split $2$-component sublink by the assumption. If $\lambda$ contains $\triangle$, then $\varphi(f)(\lambda)$ is a split $3$-component link. Thus we see that $\varphi(f)(G_{\triangle})$ does not contain a $3$-component link any of whose $2$-component sublink is nonsplittable. 
\end{proof}

\begin{Lemma}\label{minor-min-lem} 
If $G_{Y}$ is minor-minimal for I(K or C3L), then $G_{\triangle}$ is also minor-minimal for I(K or C3L). 
\end{Lemma}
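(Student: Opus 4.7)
The plan is to show both halves of the minor-minimality of $G_\triangle$: first that $G_\triangle$ itself is I(K or C3L), and second that every proper minor of $G_\triangle$ fails to be I(K or C3L). The second half admits a fairly clean argument based on Lemma~\ref{DY} and the structural observation that $G_\triangle - e = G_Y/e_Y$ for corresponding triangle and $Y$-edges; the first half is more delicate because $Y\triangle$-exchange is not known to preserve I(K or C3L) in general.

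For the second half, let $H$ be a proper minor of $G_\triangle$, produced by some sequence of edge deletions and contractions. I would split according to whether this sequence involves a triangle edge. If it includes a deletion or contraction of some triangle edge $e$, then $H$ is a minor of $G_\triangle - e$, and since $G_\triangle - e = G_Y/e_Y$ for the $Y$-edge $e_Y$ joining $x$ to the vertex of $\triangle$ opposite $e$, this is a proper minor of $G_Y$. By the minor-minimality of $G_Y$, $G_\triangle - e$ is not I(K or C3L), and by Proposition~\ref{closed} neither is its minor $H$. Otherwise all three triangle edges survive in $H$, which therefore contains $\triangle$ as a subgraph; performing the $\triangle Y$-exchange on this triangle yields a graph $H_Y$, and applying the same non-triangle-edge operations to $G_Y$ shows that $H_Y$ is a proper minor of $G_Y$, hence not I(K or C3L). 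Lemma~\ref{DY} applied to the pair $(H, H_Y)$ now forces $H$ not to be I(K or C3L).

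For the first half, I would argue by contradiction. Suppose $f_\triangle$ is a spatial embedding of $G_\triangle$ whose image contains neither a nontrivial knot nor a $3$-component link all of whose $2$-component sublinks are nonsplittable. Since $f_\triangle(\triangle)$ is unknotted, it bounds a disk in $S^3$, and one attempts to replace $\triangle$ with a $Y$ inserted near this disk to produce an embedding $f_Y$ of $G_Y$ whose cycles through $Y$ are ambient isotopic, via the inverse of $\Phi$, to cycles of $G_\triangle$ under $f_\triangle$. Such an $f_Y$ would witness $G_Y \notin$ I(K or C3L), contradicting the hypothesis. The hardest part, as expected, is that the spanning disk of $f_\triangle(\triangle)$ need not be disjoint from $f_\triangle(G_\triangle - \triangle)$, because $\triangle$ may be nontrivially $2$-linked with other cycles in an embedding that avoids nontrivial knots and bad $3$-links; the argument must therefore either find a sufficiently clean spanning disk or carefully analyze the cycles of $G_Y$ through $Y$ that correspond to walks in $G_\triangle$ which are not themselves simple cycles of $G_\triangle$.
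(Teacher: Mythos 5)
There are two genuine gaps. The first is in the half you call ``fairly clean.'' If the minor sequence \emph{contracts} a triangle edge $e=\overline{uv}$, then $H$ is a minor of $G_{\triangle}/\overline{uv}$, not of $G_{\triangle}-\overline{uv}$; in general $G/e$ is not a minor of $G-e$ (a single triangle already shows this), so your reduction to $G_{\triangle}-e=G_{Y}/e_{Y}$ does not apply. Worse, $G_{\triangle}/\overline{uv}$ is not a minor of $G_{Y}$ at all: it equals $G_{Y}/\overline{xv}/\overline{xu}$ with the edge to $w$ doubled, because the two remaining triangle edges $\overline{uw}$ and $\overline{vw}$ become parallel after the contraction, whereas in $G_{Y}/\overline{xv}/\overline{xu}$ the merged vertex meets $w$ along the single edge coming from $\overline{xw}$; a vertex--edge count then rules out realizing it as a minor of $G_{Y}$. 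This is exactly the case to which the paper devotes a separate construction: take a good embedding $g'$ of the proper minor $G_{Y}/\overline{xv}/\overline{xu}$ of $G_{Y}$ and adjoin a parallel copy of $g'(\overline{uw})$ cobounding a disk $D'$ with it, obtaining an embedding of $G_{\triangle}/\overline{uv}$ that still contains no nontrivial knot and no completely $3$-linked $3$-component link. Your case analysis omits this entirely. (Your other case, transferring a surviving triangle to $(H,H_{Y})$ and applying Lemma~\ref{DY} in contrapositive, is sound and is essentially the paper's argument for non-triangle edges, modulo the degenerate situation in which other contractions identify two triangle vertices.)

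The second gap is the one you admit: you do not prove that $G_{\triangle}$ is I(K or C3L), and the route you sketch cannot work in general, since the spanning disk of $f_{\triangle}(\triangle)$ may be punctured by the rest of the image --- this is precisely the obstruction behind Flapan--Naimi's example showing that the $Y\triangle$-exchange does not preserve IK. You should be aware, however, that the paper's own proof of this lemma establishes only the proper-minor half as well; the fact that $G_{\triangle}$ has the property is supplied in the application (the proof of Theorem~\ref{main2}) by an independent argument, namely Theorem~\ref{N9FN} together with Lemma~\ref{DY} propagating I(K or C3L) up from $K_{7}$, $N_{9}$ and $N'_{10}$ along $\triangle Y$-exchanges. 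So the viable options are to prove the property for $G_{\triangle}$ by such an independent route, or to state and use only the proper-minor half of the lemma and combine it with that external input, as the authors in effect do.
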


\begin{proof}
In the following we show that for any edge $e$ of $G_{\triangle}$ which is not a loop, there exists a spatial embedding $f$ of $G_{\triangle}-e$ and a spatial embedding $g$ of $G_{\triangle}/e$ such that each of $f(G_{\triangle}-e)$ and $g(G_{\triangle}/e)$ contains neither a nontrivial knot nor a $3$-component link any of whose $2$-component sublink is nonsplittable. If $e$ is not $\overline{uv}$, $\overline{vw}$ or $\overline{wu}$, then there exists a spatial embedding $f'$ of $G_{Y}-e$ and a spatial embedding $g'$ of $G_{Y}/e$ such that both $f'(G_{Y}-e)$ and $g'(G_{Y}/e)$ contains neither a nontrivial knot nor a $3$-component link any of whose $2$-component sublink is nonsplittable. Note that $G_{Y}-e$ (resp. $G_{Y}/e$) is obtained from $G_{\triangle}-e$ (resp. $G_{\triangle}/e$) by a single $\triangle Y$-exchange at the same $\triangle$. Then we see that each of $\varphi(f')(G_{\triangle}-e)$ and $\varphi(g')(G_{\triangle}/e)$ contains neither a nontrivial knot nor a $3$-component link any of whose $2$-component sublink is nonsplittable in the similar way as the proof of Lemma \ref{DY}. If $e$ is one of $\overline{uv}$, $\overline{vw}$ and $\overline{wu}$, we may assume that $e=\overline{uv}$ without loss of generality. Now there exists a spatial embedding $f'$ of $G_{Y}/\overline{xw}$ such that $f'(G_{Y}/\overline{xw})$ contains neither a nontrivial knot nor a $3$-component link any of whose $2$-component sublink is nonsplittable. Then we can see that $G_{\triangle}-\overline{uv}=G_{Y}/\overline{xw}$. On the other hand, there exists a spatial embedding $g'$ of $G_{Y}/\overline{xv}/\overline{xu}$ such that $g'(G_{Y}/\overline{xv}/\overline{xu})$ contains neither a nontrivial knot nor a $3$-component link any of whose $2$-component sublink is nonsplittable. Take a $2$-disk $D'$ in the $3$-sphere such that $D'\cap g'(G_{Y}/\overline{xv}/\overline{xu})=g'(\overline{uw})$ and $\partial D' \cap g'(G_{Y}/\overline{xv}/\overline{xu}) = \{g'(u),g'(w)\}$. Then $\left(g'(G_{Y}/\overline{xv}/\overline{xu})\setminus {\rm int}g'(\overline{uw})\right)\cup \partial D'$ may be regarded as the image of a spatial embedding of $G_{\triangle}/\overline{uv}$, which is denoted by $g$. It is clear that $g(G_{\triangle}/\overline{uv})$ contains neither a nontrivial knot nor a $3$-component link any of whose $2$-component sublink is nonsplittable. 
\end{proof}

\begin{Remark}\label{ot_rem} 
{\rm 
Lemma \ref{minor-min-lem} has already been proven by Ozawa-Tsutsumi in a more general form \cite[Lemma 3.1, Exercise 3.2]{OT07}. But we give a proof of Lemma \ref{minor-min-lem} as described above for the reader's convenience. 
}
\end{Remark}

\section{Proof of Theorem \ref{main1}} 

\begin{Lemma}\label{main1lem} 
Each of the graphs $N_{9},N_{10},N_{11},N'_{10},N'_{11}$ and $N'_{12}$ in the Heawood family is not IK. 
\end{Lemma}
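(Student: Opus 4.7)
The plan is to exhibit, for each of the six listed graphs, a spatial embedding whose image contains no nontrivial knot. The base case is $N'_{10}$: since $N'_{10}$ is isomorphic to the Flapan--Naimi graph $G_{FN}$, the embedding constructed in \cite{FN08} already witnesses that $N'_{10}$ is not IK, and this case requires no further work.

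For the other five graphs, my strategy is to transport the Flapan--Naimi embedding through the Heawood family along sequences of $\triangle Y$- and $Y\triangle$-exchanges. The easy direction is the $Y\triangle$-exchange: if $f\in\mathrm{SE}(G_Y)$ contains no nontrivial knot, then neither does $\varphi(f)\in\mathrm{SE}(G_\triangle)$. Indeed, by Proposition \ref{map}, any cycle $\gamma$ of $G_\triangle$ with $\gamma\neq\triangle$ has the property that $\varphi(f)(\gamma)$ is ambient isotopic to $f(\Phi(\gamma))$, which is a trivial knot by hypothesis; and $\varphi(f)(\triangle)=\partial D$ is the boundary of an embedded $2$-disk in $S^3$, hence unknotted. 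This is precisely the ``knot'' half of the proof of Lemma \ref{DY}, reused in isolation. Thus, whenever a target graph can be reached from $N'_{10}$ by a chain of $Y\triangle$-exchanges (that is, reverse arrows in Figure \ref{Heawood}), iterating $\varphi$ transports the Flapan--Naimi embedding to a good embedding of that target.

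For a $\triangle Y$-step in the forward direction I would argue geometrically. Given a spatial embedding $f$ of $G_\triangle$ with no nontrivial knot in which $f(\triangle)$ bounds an embedded $2$-disk $D$ meeting $f(G_\triangle)$ only in $\partial D$, place the new vertex $x$ in the interior of $D$ and join it to $u,v,w$ by three disjoint arcs in $D$. The resulting spatial embedding $g$ of $G_Y$ has, for every cycle $\gamma'$ of $G_Y$ and each $\Phi$-preimage $\gamma$ of $\gamma'$, that $g(\gamma')$ is ambient isotopic to $f(\gamma)$ via a rel-boundary isotopy supported in $D$; so $g(G_Y)$ still contains no nontrivial knot. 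The main obstacle is guaranteeing that the required clean disk $D$ actually exists at each $\triangle Y$-step: this is automatic whenever the triangle to be split was itself created by a previous application of $\varphi$, but any step that does not inherit a clean disk from its predecessor has to be checked by inspecting the particular embedding at hand. Since only five cases remain and each graph in the Heawood family has precisely $21$ edges, this verification reduces to a finite, if somewhat tedious, case analysis carried out by exhibiting the explicit diagrams.
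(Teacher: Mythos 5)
Your machinery is the paper's machinery: transport explicit knotless embeddings through the Heawood family via the map $\varphi$ and Proposition \ref{map}, using the surjectivity of $\Phi$ so that every cycle of $G_{Y}$ is accounted for by a cycle of $G_{\triangle}$. The difference is the route. The paper does not attempt to propagate the Flapan--Naimi embedding; it exhibits a fresh embedding $f_{9}$ of $N_{9}$, checks directly that $f_{9}(N_{9})$ contains no nontrivial knot, and then draws explicit embeddings of $N_{10},N_{11},N'_{11},N'_{12}$, each with a marked $Y$ whose contraction under $\varphi$ recovers the previous embedding in the chain; $N'_{10}$ alone is settled by citing \cite{FN08}. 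Exhibiting $g$ with $\varphi(g)=f$ is the same device as your ``place $x$ in a clean disk'' construction, just presented with the lifted figure up front instead of a disk-existence argument.

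The concrete problem with your route is that the ``easy direction'' is vacuous at your chosen base point. One can read off from the proof of Theorem \ref{N9FN} that every vertex of $N'_{10}$ has degree $4$ or $5$, so $N'_{10}$ admits no $Y\triangle$-exchange at all; it is a $\triangle Y$-minimal element of the family (this is exactly why the proof of Theorem \ref{main2} must list $K_{7}$, $N_{9}$ \emph{and} $N'_{10}$ as $\triangle Y$-sources). Hence no target is reachable from $N'_{10}$ by iterating $\varphi$, the first move out of $N'_{10}$ is necessarily an up-step, and $N_{9},N_{10},N_{11}$ are only reached by climbing to $N'_{11}$ and descending again (the same degree count shows $N_{9}$ is itself $\triangle Y$-minimal, so it can only be reached as a $\varphi$-image from above). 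Consequently the entire weight of your proof sits on the clean-disk verifications you defer, and some direct inspection of an explicit diagram is unavoidable: either you check, triangle by triangle, that the Flapan--Naimi picture (and its transports) bound disks meeting the spatial graph only in their boundaries, or you do what the paper does and check once that a conveniently drawn $f_{9}(N_{9})$ is knotless. Your plan can be completed --- Remark \ref{FN_revisited} confirms that the Flapan--Naimi embedding really is $\varphi$ of a knotless embedding of $N'_{11}$ --- but as written it misjudges where the work lies and omits all of it.
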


\begin{proof}
Since the case of $N'_{10}$ has been already shown by Flapan-Naimi \cite{FN08}, we show that each of the graphs $N_{9},N_{10},N_{11},N'_{11}$ and $N'_{12}$ is not IK. Let $f_{9}$ be the spatial embedding of $N_{9}$ as illustrated in Fig. \ref{no_knots}. Then it can be checked directly that $f_{9}(N_{9})$ does not contain a nontrivial knot. Thus $N_{9}$ is not IK. Let $f_{10}$ be the spatial embedding of $N_{10}$ as illustrated in Fig. \ref{no_knots}. Let $\varphi_{N_{10},N_{9}}$ be the map from ${\rm SE}(N_{10})$ to ${\rm SE}(N_{9})$ induced by the $Y \triangle$-exchange from $N_{10}$ to $N_{9}$ at the marked $Y$ as illustrated in Fig. \ref{no_knots}. Then it is clear that $\varphi(f_{10})=f_{9}$. Since $f_{9}(N_{9})$ does not contain a nontrivial knot, by Proposition \ref{map} it follows that $f_{10}(N_{10})$ also does not contain a nontrivial knot. Namely $N_{10}$ is not IK. By repeating this argument, we can see that each of the graphs $N_{11}, N'_{11}$ and $N'_{12}$ is also not IK, see Fig. \ref{no_knots}. 
\end{proof}

\begin{figure}[htbp]
      \begin{center}
\scalebox{0.45}{\includegraphics*{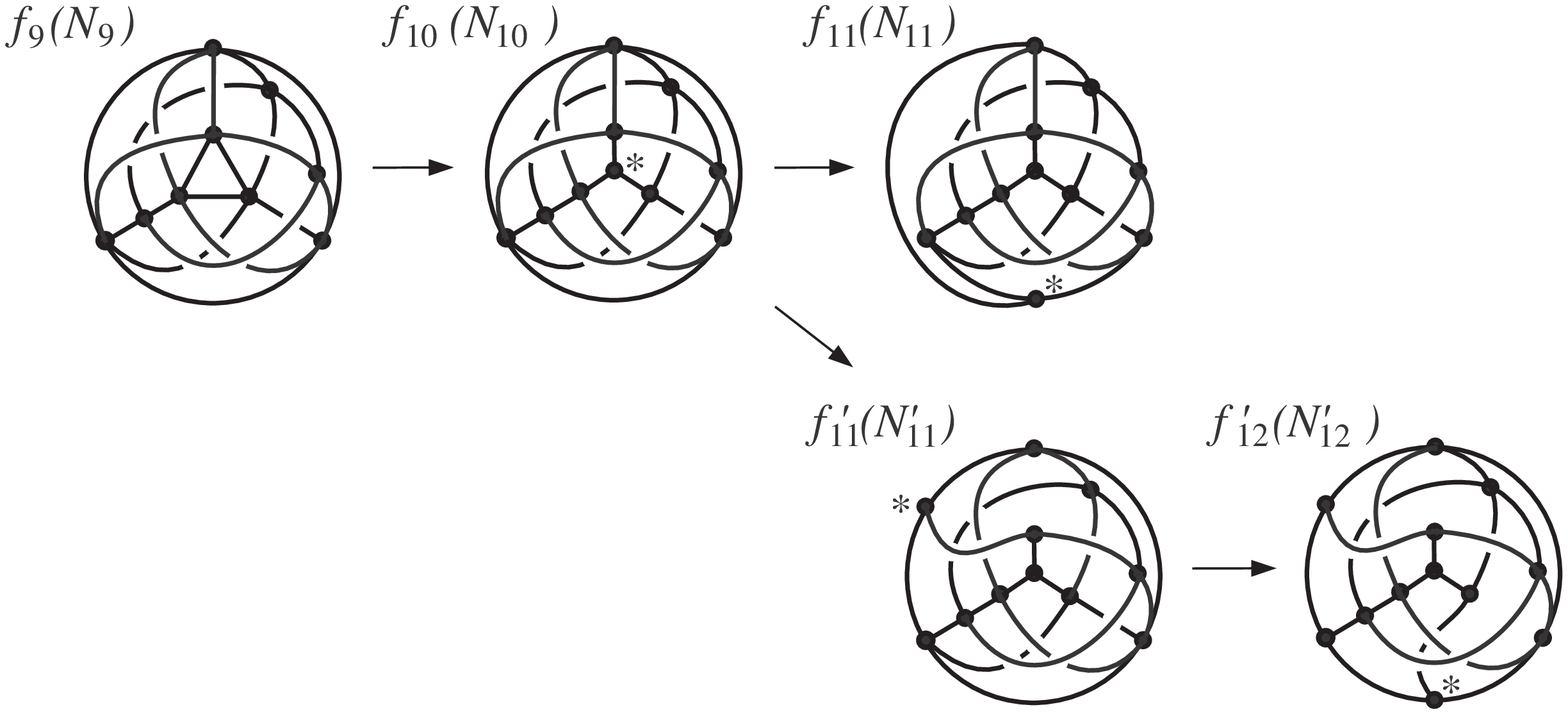}}
      \end{center}
   \caption{}
  \label{no_knots}
\end{figure} 

\begin{proof}[Proof of Theorem \ref{main1}.] 
First we show that (1) and (2) are equivalent. Since we have already known that (2) implies (1), we show that (1) implies (2). If $G$ is IK, then by Lemma \ref{main1lem} we see that $G$ is not one of $N_{9},N_{10},N_{11},N'_{10},N'_{11}$ or $N'_{12}$. Namely $G$ is obtained from $K_{7}$ by a finite sequence of $\triangle Y$-exchanges. Next we show that (2) and (3) are equivalent. Assume that $G$ is obtained from $K_{7}$ by a finite sequence of $\triangle Y$-exchanges. Note that $\Gamma^{(3)}(K_{7})$ is the empty set. Thus by Proposition \ref{main1lem2} we see that $\Gamma^{(3)}(G)$ is the empty set. Conversely, if $G$ is one of $N_{9},N_{10},N_{11},N'_{10},N'_{11}$, and $N'_{12}$, then $\Gamma^{(3)}(G)$ is not the empty set. This completes the proof. 
\end{proof}

\begin{Remark}\label{FN_revisited} 
{\rm 
Let $f'_{11}$ be the spatial embedding of $N'_{11}$ as illustrated in Fig. \ref{no_knots} and $f'_{10}$ the spatial embedding of $N'_{10}$ as illustrated in Fig. \ref{FN}. Let $\varphi_{N'_{11},N'_{10}}$ be the map from ${\rm SE}(N'_{11})$ to ${\rm SE}(N'_{10})$ induced by the $Y \triangle$-exchange from $N'_{11}$ to $N'_{10}$ at the double-marked $Y$ as illustrated in Fig. \ref{FN}. Then it is clear that $\varphi(f'_{11})=f'_{10}$. Moreover we can see that $f'_{10}$ coincides with Flapan-Naimi's example of a spatial embedding of $N'_{10}$ whose image does not contain a nontrivial knot as illustrated in the left side of Fig. \ref{FN} \cite{FN08}. 
}
\end{Remark}

\begin{figure}[htbp]
      \begin{center}
\scalebox{0.45}{\includegraphics*{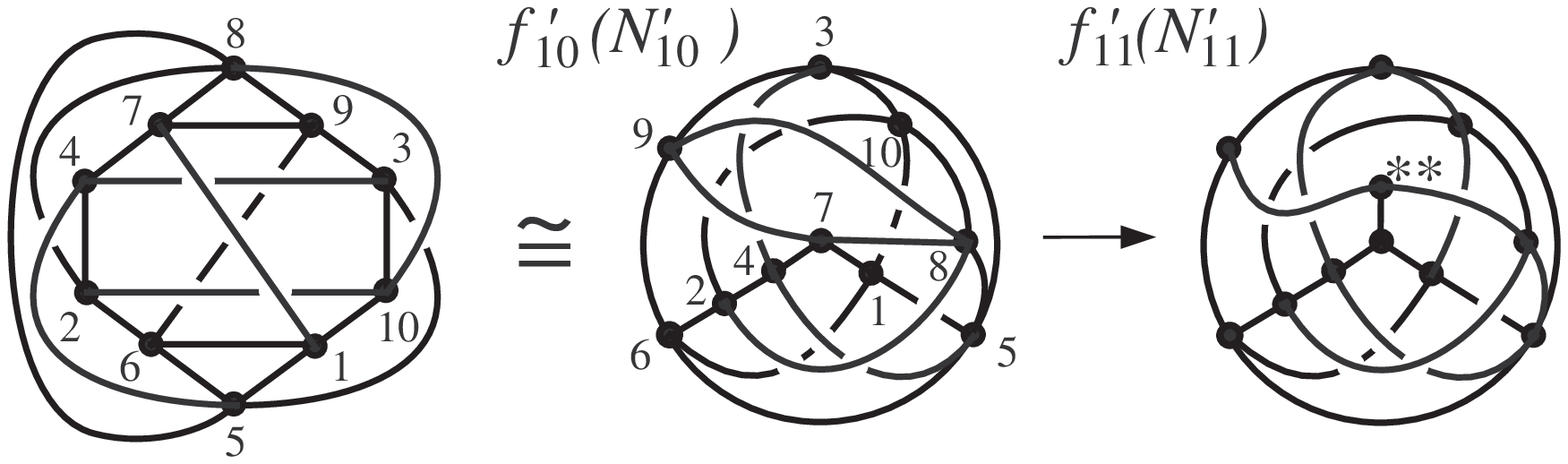}}
      \end{center}
   \caption{}
  \label{FN}
\end{figure} 
\section{Proof of Theorem \ref{main2}} 

We need some lemmas which are needed to prove Theorem \ref{main2}. 

\begin{Lemma}\label{CGTY} {\rm (Conway-Gordon \cite{CG83}, Taniyama-Yasuhara \cite{TY01})} Let $G$ be a graph in the Petersen family and $f$ a spatial embedding of $G$. Then there exists an element $\lambda$ in $\Gamma^{(2)}(G)$ such that ${\rm lk}(f(\lambda))$ is odd, where ${\rm lk}$ denotes the {\it linking number} in the $3$-sphere. 
\end{Lemma}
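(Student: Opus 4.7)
The plan is to proceed by induction along the Petersen family, using Conway-Gordon's classical theorem for $K_{6}$ as the base case and propagating through $\triangle Y$-exchanges. Every graph in the Petersen family is obtained from $K_{6}$ by a chain of $\triangle Y$-moves, so it suffices to prove (i) the assertion for $K_{6}$, and (ii) if the assertion holds for $G_{\triangle}$, then it holds for $G_{Y}$.

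For (i), I would invoke Conway-Gordon's stronger statement that
\[
a(K_{6}, f) := \sum_{\lambda \in \Gamma^{(2)}(K_{6})} {\rm lk}(f(\lambda))^{2} \equiv 1 \pmod{2}
\]
for every $f \in {\rm SE}(K_{6})$, which immediately forces some $\lambda$ with odd linking number. For (ii), I would aim to establish the $\triangle Y$-invariance $a(G_{Y}, f) \equiv a(G_{\triangle}, \varphi(f)) \pmod{2}$ for every $f \in {\rm SE}(G_{Y})$. Splitting the $G_{\triangle}$-sum into the parts where $\mu \not\supset \triangle$ and where $\mu \supset \triangle$, using Proposition \ref{map} to transfer linking numbers across $\Phi^{(2)}$, and using that $\Phi^{(2)}$ is surjective with fibers of size $1$ or $2$ (so size-$2$ fibers contribute $2 {\rm lk}^{2} \equiv 0$ modulo $2$), the invariance reduces to the identity
\[
\sum_{\mu \supset \triangle} {\rm lk}(\varphi(f)(\mu))^{2} \equiv \sum_{\lambda \,:\, |(\Phi^{(2)})^{-1}(\lambda)| = 2} {\rm lk}(f(\lambda))^{2} \pmod{2}.
\]

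The main obstacle is this last identity. The key geometric input is that $\varphi(f)(\triangle) = \partial D$, where $D$ is a $2$-disk meeting $f(G_{Y})$ exactly in $f(Y)$; hence for $\mu = \triangle \sqcup \gamma$, the value ${\rm lk}(\varphi(f)(\triangle), f(\gamma))$ is the algebraic intersection number of $f(\gamma)$ with $D$. Decomposing $D$ into its three sectors cut out by $f(Y)$ and converting each sector-intersection with $f(\gamma)$ into a linking number of $f(\gamma)$ with a cycle of $G_{Y}$ through the new vertex $x$, paired with the combinatorial observation that a $\lambda \in \Gamma^{(2)}(G_{Y})$ admits two $\Phi^{(2)}$-preimages precisely when the vertex of $\triangle$ opposite the traversed arms of $Y$ is unused elsewhere in $\lambda$, should supply the required mod-$2$ matching. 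Once $\triangle Y$-invariance of $a$ is secured, iterating along the $\triangle Y$-chain starting at $K_{6}$ gives $a(G, f) \equiv 1 \pmod{2}$ for every $G$ in the Petersen family, which yields the lemma.
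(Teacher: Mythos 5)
The paper does not prove this lemma at all --- it is imported as a known result from \cite{CG83} and \cite{TY01} --- so there is no internal argument to compare yours against; your proposal has to stand on its own. Your overall route (induct along the $\triangle Y$-chain from $K_{6}$, with Conway--Gordon's congruence $\sum_{\lambda\in\Gamma^{(2)}(K_{6})}{\rm lk}(f(\lambda))\equiv 1 \pmod 2$ as the base case) is reasonable, and the bookkeeping reducing the inductive step to the identity $\sum_{\mu\supset\triangle}{\rm lk}(\varphi(f)(\mu))^{2}\equiv\sum_{\lambda:\,\abs{(\Phi^{(2)})^{-1}(\lambda)}=2}{\rm lk}(f(\lambda))^{2}$ is correct, as is your characterization of the size-two fibers. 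The gap is in that identity and in your plan for proving it. The left-hand side is zero term by term, for a reason you half-state and then walk past: for $\mu=\triangle\sqcup\gamma$ the cycle $\gamma$ avoids $u,v,w$, so $f(\gamma)$ is disjoint from the disk $D$ (which meets $f(G_{Y})$ only in $f(Y)$), and hence ${\rm lk}(\varphi(f)(\triangle),f(\gamma))$, being the intersection number of $f(\gamma)$ with $D$, is $0$; no sector decomposition is needed or relevant. Your identity therefore asserts $\sum_{\abs{\rm fiber}=2}{\rm lk}(f(\lambda))\equiv 0$, and this is \emph{false} for a general $\triangle Y$-exchange: let $G_{\triangle}$ be the triangle $uvw$ together with a vertex $a$ joined to $u$ and $v$ and a disjoint triangle $bcd$; then $G_{Y}$ has exactly one element of $\Gamma^{(2)}$, namely $[u\,x\,v\,a]\cup[b\,c\,d]$, its $\Phi^{(2)}$-fiber has two elements, and an embedding realizing this pair as a Hopf link makes the right-hand sum equal to $1$. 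So the quantity $a(G,f)$ is not a $\triangle Y$-invariant, and no local argument near the exchanged triangle can supply the ``mod-$2$ matching'' you describe. For the Petersen family the identity does hold, but only because the right-hand sum is empty: in each of these graphs every pair of disjoint cycles passes through every vertex, so no $\lambda$ ever omits the vertex of $\triangle$ opposite the two arms used and every fiber of $\Phi^{(2)}$ is a singleton. That structural fact is the missing ingredient; it is graph-specific and would itself have to be propagated along the $\triangle Y$-chain.

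The good news is that you do not need the invariance of $a$ at all, so the gap is repairable by weakening your inductive claim to the existential statement of the lemma itself. Given $f\in{\rm SE}(G_{Y})$, apply the induction hypothesis to $\varphi(f)$ to get $\mu\in\Gamma^{(2)}(G_{\triangle})$ with ${\rm lk}(\varphi(f)(\mu))$ odd; since every $\mu$ containing $\triangle$ has linking number $0$ by the disk argument above, this $\mu$ lies in the domain of $\Phi^{(2)}$, and Proposition \ref{map} gives ${\rm lk}\bigl(f(\Phi^{(2)}(\mu))\bigr)={\rm lk}(\varphi(f)(\mu))$ odd. This transfer --- the same pattern the paper uses in the proof of Lemma \ref{DY} --- closes the induction with no extra combinatorial input and renders the problematic identity unnecessary.
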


Let $D_{4}$ be the graph as illustrated in Fig. \ref{D_4}. We denote the set of all cycles with exactly four edges of $D_{4}$ by $\Gamma_{4}(D_{4})$. For a spatial embedding $f$ of $D_{4}$, we define 
\begin{eqnarray*}
{\alpha}(f)\equiv \sum_{\gamma\in \Gamma_{4}(D_{4})}a_{2}(f(\gamma)) \pmod{2},
\end{eqnarray*}
where $a_{2}$ denotes the second coefficient of the {\it Conway polynomial}. Note that $a_{2}(K)$ of a knot $K$ is congruent to the {\it Arf invariant} modulo two \cite{Ka83}. Then the following is known. 

\begin{Lemma}\label{TYa2} {\rm (Taniyama-Yasuhara \cite{TY01})} Let $f$ be a spatial embedding of $D_{4}$ and $\lambda,\lambda'$ all elements in $\Gamma^{(2)}(D_{4})$. If both ${\rm lk}(f(\lambda))$ and ${\rm lk}(f(\lambda'))$ are odd, then $\alpha(f)=1$. 
\end{Lemma}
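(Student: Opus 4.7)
The plan is to prove the Conway--Gordon type congruence
\begin{equation*}
\alpha(f) \equiv {\rm lk}(f(\lambda)) \cdot {\rm lk}(f(\lambda')) \pmod{2}
\end{equation*}
for every spatial embedding $f$ of $D_4$. The lemma follows at once, since if both linking numbers are odd then the right-hand side is $1$ modulo $2$, forcing $\alpha(f)=1$.

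First I would verify the congruence on a standard planar embedding $f_0$ of $D_4$. In that case each $f_0(\gamma)$ with $\gamma \in \Gamma_4(D_4)$ is a trivial knot, so $a_2$ vanishes on all of them and $\alpha(f_0)=0$; similarly $f_0(\lambda)$ and $f_0(\lambda')$ are split $2$-component links, so both linking numbers vanish. Hence both sides of the congruence are zero on $f_0$.

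Since any two spatial embeddings of a graph are related by a finite sequence of crossing changes and ambient isotopy, it suffices to show that both sides change by the same amount modulo $2$ under a single crossing change. By the Conway skein relation $a_2(L_+)-a_2(L_-)={\rm lk}(L_0)$, a crossing change between two arcs lying on edges $e_1, e_2$ of $f(D_4)$ alters $a_2(f(\gamma))$ by $\pm {\rm lk}(L_{\gamma,0})$ for each $\gamma \in \Gamma_4(D_4)$ that contains both $e_1$ and $e_2$, where $L_{\gamma,0}$ is the $2$-component link obtained by smoothing the crossing on $f(\gamma)$. Meanwhile each of ${\rm lk}(f(\lambda))$ and ${\rm lk}(f(\lambda'))$ changes by $\pm 1$ precisely when $e_1$ and $e_2$ lie in distinct components of the corresponding pair of cycles, and is unchanged otherwise.

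The main obstacle is a combinatorial case analysis. For each unordered pair $\{e_1,e_2\}$ of edges of $D_4$, I must enumerate the $4$-cycles containing both edges, identify the smoothed $2$-component links, and check modulo $2$ that the total change in $\alpha(f)$ matches the change in the bilinear product ${\rm lk}(f(\lambda)) \cdot {\rm lk}(f(\lambda'))$. The decisive cases are those in which exactly one of $\lambda$, $\lambda'$ separates $e_1$ from $e_2$, since these are exactly the crossing changes that toggle one factor of the product modulo $2$; adjacent or coincident edge pairs should contribute zero on both sides. Carrying out this verification against the concrete combinatorial structure of $D_4$, and in particular matching the smoothed links $L_{\gamma,0}$ with subcycles of $\lambda$ and $\lambda'$, is where the real work lies.
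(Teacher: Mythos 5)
The paper gives no proof of this lemma: it is quoted as a known result of Taniyama--Yasuhara \cite{TY01}, so there is no in-paper argument to measure you against. Your strategy --- prove the sharper congruence $\alpha(f)\equiv {\rm lk}(f(\lambda))\cdot{\rm lk}(f(\lambda'))\pmod 2$ by checking it on a planar embedding and then showing both sides change equally under a crossing change, using $a_{2}(K_{+})-a_{2}(K_{-})={\rm lk}(L_{0})$ --- is exactly the Conway--Gordon-type argument of the cited source, and it is sound. The one substantive point buried in the case analysis you defer is that the decisive case is not settled by a combinatorial count of $4$-cycles: when the crossing change involves one strand from each component of $\lambda$ (edges in the two opposite digons), the four $4$-cycles through that pair of edges contribute smoothed links $L_{\gamma,0}$ whose components depend only on the choice of the $b$-edge and the $d$-edge respectively, and by bilinearity of the linking number the shared arcs cancel mod $2$, leaving $\sum_{\gamma}{\rm lk}(L_{\gamma,0})\equiv{\rm lk}(f(\lambda'))\pmod 2$ --- which is precisely the (embedding-dependent) amount by which the product on the right-hand side changes when ${\rm lk}(f(\lambda))$ flips parity. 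The same homological cancellation, with every varying edge occurring an even number of times, is what makes the adjacent and coincident cases contribute zero to $\alpha$. You gesture at this when you speak of matching the $L_{\gamma,0}$ with subcycles of $\lambda$ and $\lambda'$; once that step is written out the proof is complete.
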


\begin{figure}[htbp]
      \begin{center}
\scalebox{0.45}{\includegraphics*{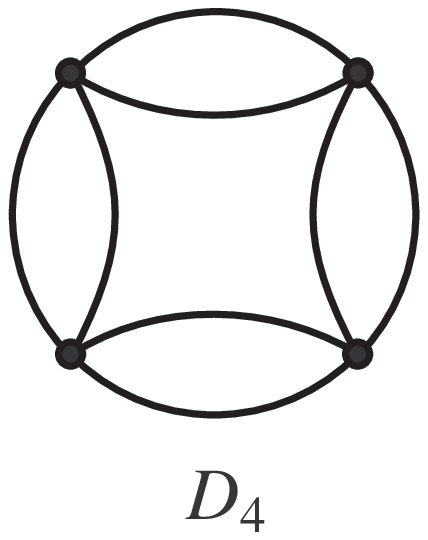}}
      \end{center}
   \caption{}
  \label{D_4}
\end{figure} 

Let $G$ be a graph which contains $D_{4}$ as a minor and $f$ a spatial embedding of $G$. Then we define
\begin{eqnarray*}
{\alpha}(f)\equiv \sum_{\gamma\in \Gamma_{4}(D_{4})}a_{2}(f(\Psi_{D_{4},G}(\gamma))) \pmod{2}.
\end{eqnarray*}

\begin{Lemma}\label{D4type} 
Let $G$ be a graph which contains $D_{4}$ as a minor and $f$ a spatial embedding of $G$. For two elements $\mu$ and $\mu'$ in $\Psi_{D_{4},G}^{(2)}\left(\Gamma^{(2)}(D_{4})\right)$, if both ${\rm lk}(f(\mu))$ and ${\rm lk}(f(\mu'))$ are odd, then $\alpha(f)=1$. 
\end{Lemma}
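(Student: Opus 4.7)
The plan is to reduce this statement to Lemma \ref{TYa2} by a naturality argument based on the map $\psi = \psi_{G,D_{4}} : \mathrm{SE}(G) \to \mathrm{SE}(D_{4})$ associated with the minor relation $D_{4} \le G$, as set up just before Proposition \ref{map2}. The point is that both the $\bmod\,2$ invariant $\alpha(f)$ and the relevant linking numbers can be read off equally well from $\psi(f)(D_{4})$ instead of $f(G)$, so the hypothesis and conclusion both transfer to the spatial embedding $\psi(f)$ of $D_{4}$.

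First I would check that $\alpha(f) \equiv \alpha(\psi(f)) \pmod{2}$. For each $\gamma \in \Gamma_{4}(D_{4})$, Proposition \ref{map2} with $n=1$ gives that the knot $\psi(f)(\gamma)$ is ambient isotopic to $f(\Psi_{D_{4},G}(\gamma))$, hence $a_{2}(\psi(f)(\gamma)) = a_{2}(f(\Psi_{D_{4},G}(\gamma)))$. Summing over $\gamma \in \Gamma_{4}(D_{4})$ and reducing mod $2$ gives the desired equality from the very definitions of the two $\alpha$'s.

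Next I would transfer the linking-number hypothesis. Since $\mu, \mu' \in \Psi_{D_{4},G}^{(2)}(\Gamma^{(2)}(D_{4}))$, there exist (uniquely, by injectivity of $\Psi_{D_{4},G}^{(2)}$) $\lambda, \lambda' \in \Gamma^{(2)}(D_{4})$ with $\mu = \Psi_{D_{4},G}^{(2)}(\lambda)$ and $\mu' = \Psi_{D_{4},G}^{(2)}(\lambda')$. Proposition \ref{map2} with $n=2$ then says $\psi(f)(\lambda)$ is ambient isotopic to $f(\mu)$ and $\psi(f)(\lambda')$ is ambient isotopic to $f(\mu')$, so $\mathrm{lk}(\psi(f)(\lambda))$ and $\mathrm{lk}(\psi(f)(\lambda'))$ are both odd. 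Since the phrasing of Lemma \ref{TYa2} makes clear that $\Gamma^{(2)}(D_{4})$ consists of exactly two elements, $\lambda$ and $\lambda'$ are automatically all of $\Gamma^{(2)}(D_{4})$, so Lemma \ref{TYa2} applies to $\psi(f)$ and yields $\alpha(\psi(f)) = 1$. Combined with the first step, this gives $\alpha(f) = 1$.

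I do not foresee a genuine obstacle; the argument is pure naturality and the real work has been done in Lemma \ref{TYa2} and Proposition \ref{map2}. The only point requiring a bit of care is that the map $\psi$ (and the injection $\Psi_{D_{4},G}^{(2)}$) depends on a choice of minor realization of $D_{4}$ inside $G$, but any such fixed choice suffices because the definition of $\alpha(f)$ in this section is made relative to a chosen $\Psi_{D_{4},G}$, and that same choice is used throughout.
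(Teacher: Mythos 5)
Your proposal is correct and follows essentially the same route as the paper: transfer the linking-number hypothesis to $\psi_{G,D_{4}}(f)$ via Proposition \ref{map2}, apply Lemma \ref{TYa2} there, and observe that $\alpha(f)=\alpha(\psi_{G,D_{4}}(f))$ because $a_{2}$ is preserved under the ambient isotopies supplied by Proposition \ref{map2}. The paper's proof is just a more compressed version of the same naturality argument.
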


\begin{proof}
For two elements $\lambda$ and $\lambda'$ in $\Gamma^{(2)}(D_{4})$, we see that both ${\rm lk}(f(\Psi_{D_{4},G}^{(2)}(\lambda)))$ and ${\rm lk}(f(\Psi_{D_{4},G}^{(2)}(\lambda')))$ are odd by the assumption. Then by Proposition \ref{map2}, it follows that ${\rm lk}(\psi_{G,D_{4}}(f)(\lambda))$ and ${\rm lk}(\psi_{G,D_{4}}(f)(\lambda'))$ are also odd. Thus by Lemma \ref{TYa2}, we have that 
\begin{eqnarray*}
\alpha(f) 
& \equiv & \sum_{\gamma\in \Gamma_{4}(D_{4})}a_{2}(f(\Psi_{D_{4},G}(\gamma)))\\
& = & \sum_{\gamma\in \Gamma_{4}(D_{4})}a_{2}(\psi_{G,D_{4}}(f)(\gamma))\\
& \equiv & 1 \pmod{2}. 
\end{eqnarray*} 
\end{proof}

Now we show the following theorem, which is the most important part in the proof of Theorem \ref{main2}. 

\begin{Theorem}\label{N9FN} 
Let $G$ be $N_{9}$ or $N'_{10}$. For every spatial embedding $f$ of $G$, there exists an element $\gamma$ in $\Gamma(G)$ such that $a_{2}(f(\gamma))$ is odd, or there exists an element $\lambda$ in $\Gamma^{(3)}(G)$ such that each $2$-component sublink of $f(\lambda)$ has an odd linking number. 
\end{Theorem}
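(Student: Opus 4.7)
The plan is to prove Theorem \ref{N9FN} by a Conway-Gordon style parity argument combining Lemmas \ref{CGTY} and \ref{D4type}. Fix $G \in \{N_9, N'_{10}\}$ and a spatial embedding $f$ of $G$. Assume, for contradiction, that (i) $a_2(f(\gamma))$ is even for every $\gamma \in \Gamma(G)$, and that (ii) no element $\lambda \in \Gamma^{(3)}(G)$ has all three pairwise linking numbers of $f(\lambda)$ odd.

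First I would enumerate several Petersen family minors of $G$. Since $G$ is obtained from $K_{7}$ by a sequence of $\triangle Y$ and $Y\triangle$-exchanges, judiciously chosen edge deletions and contractions produce minors belonging to the Petersen family. For each such minor $\pi$, Lemma \ref{CGTY} applied to $\psi_{G,\pi}(f)$ gives an element $\lambda'_{\pi} \in \Gamma^{(2)}(\pi)$ with ${\rm lk}(\psi_{G,\pi}(f)(\lambda'_{\pi}))$ odd; by Proposition \ref{map2}, this lifts to $\lambda_{\pi} := \Psi^{(2)}_{\pi, G}(\lambda'_{\pi}) \in \Gamma^{(2)}(G)$ with ${\rm lk}(f(\lambda_{\pi}))$ odd. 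This produces a concrete list of 2-cycle unions of $G$ that must have odd linking number under $f$.

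Next I would enumerate enough $D_{4}$ minors of $G$. Under assumption (i), for every $D_{4}$ minor of $G$ one has $\alpha(f) \equiv 0 \pmod{2}$; hence the contrapositive of Lemma \ref{D4type} says that $\Psi^{(2)}_{D_{4}, G}(\Gamma^{(2)}(D_{4}))$ contains at most one element with odd linking number under $f$. Taken together with the odd-linking 2-cycle unions produced from the Petersen family minors, these $D_{4}$-constraints should pin down the parity of ${\rm lk}(f(\mu))$ for every $\mu \in \Gamma^{(2)}(G)$ that lies in the image of some such $\Psi^{(2)}$.

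The final step is to show, from the concrete combinatorial structure of $N_{9}$ and $N'_{10}$, that every parity assignment on $\Gamma^{(2)}(G)$ consistent with the above constraints yields three mutually disjoint cycles $\gamma_{1}, \gamma_{2}, \gamma_{3} \in \Gamma(G)$ with $\gamma_{1} \cup \gamma_{2} \cup \gamma_{3} \in \Gamma^{(3)}(G)$ and ${\rm lk}(f(\gamma_{i} \cup \gamma_{j}))$ odd for all $i \ne j$, contradicting (ii). The main obstacle is this last step: systematically listing the $D_{4}$- and Petersen family minors of $N_{9}$ and $N'_{10}$, tracing the lifts through $\Psi^{(2)}$, and verifying case by case that the resulting system of parity constraints always forces such a 3-cycle union to exist. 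The argument depends crucially on the precise incidence structure of these two graphs and on the fact, established in Theorem \ref{main1}, that $\Gamma^{(3)}(G) \ne \emptyset$ for $G \in \{N_{9}, N'_{10}\}$.
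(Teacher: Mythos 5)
Your strategy is essentially the one the paper itself uses: apply Lemma \ref{CGTY} to Petersen-family minors of $G$ to produce $2$-component links with odd linking number, feed pairs of such links into $D_{4}$ minors via Lemma \ref{D4type} to force a cycle with odd $a_{2}$, and fall back on the homological additivity of the linking number mod $2$ to replace a long cycle by shorter ones whenever a pair of odd-linking links does not sit inside a common $D_{4}$; the terminal configurations that survive all of this are exactly the $3$-component links (e.g.\ $[1\,2\,3]\cup[4\,5\,6]\cup[7\,8\,9]$ in $N_{9}$) all of whose $2$-component sublinks are forced to have odd linking number. Recasting this as a proof by contradiction, with the contrapositive of Lemma \ref{D4type} saying that each $D_{4}$ image carries at most one odd-linking pair, is only a cosmetic reorganization.

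The gap is that everything after the setup is asserted rather than carried out, and the assertion is where the entire content of the theorem lives. What must actually be done is: choose specific Petersen-family minors ($K_{6}$, $P_{7}$, $P_{9}$ embedded in $N_{9}$ and $N'_{10}$ in particular ways), list the images $\Psi^{(2)}\bigl(\Gamma^{(2)}(\cdot)\bigr)$ up to the symmetries of the graph, and then for each resulting pair $(\mu,\mu')$ of odd-linking candidates either exhibit an explicit subgraph containing a $D_{4}$ minor whose $\Gamma^{(2)}$ image is precisely $\{\mu,\mu'\}$, or decompose $\mu'$ homologically into shorter cycles and recurse, until the handful of residual configurations yielding the all-odd $3$-component link are isolated. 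None of this is routine bookkeeping that can be waved at: the paper spends several pages on exactly this enumeration, and whether a given pair fits into a $D_{4}$ minor depends delicately on the incidence structure (some pairs require adjoining extra edges such as $\overline{6\ 9}$ or $\overline{5\ 8}$ to build the $D_{4}$). Moreover, your intermediate claim that the constraints ``pin down the parity of ${\rm lk}(f(\mu))$ for every $\mu$'' in the relevant images is both stronger than needed and not what the argument delivers; it only ever locates \emph{some} odd-linking pair at each stage. As written, the proposal is a correct description of the proof's architecture but not a proof.
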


\begin{proof}
We give a label to each vertex of $G$ as illustrated in Fig. \ref{N9N10}. In the following we denote a $k$-cycle $\overline{i_{1}i_{2}}\cup \overline{i_{2}i_{3}}\cup \cdots \cup \overline{i_{k-1}i_{k}}\cup \overline{i_{k}i_{1}}$ of $G$ by $[i_{1}i_{2}\cdots i_{k}]$. 

\begin{figure}[htbp]
      \begin{center}
\scalebox{0.45}{\includegraphics*{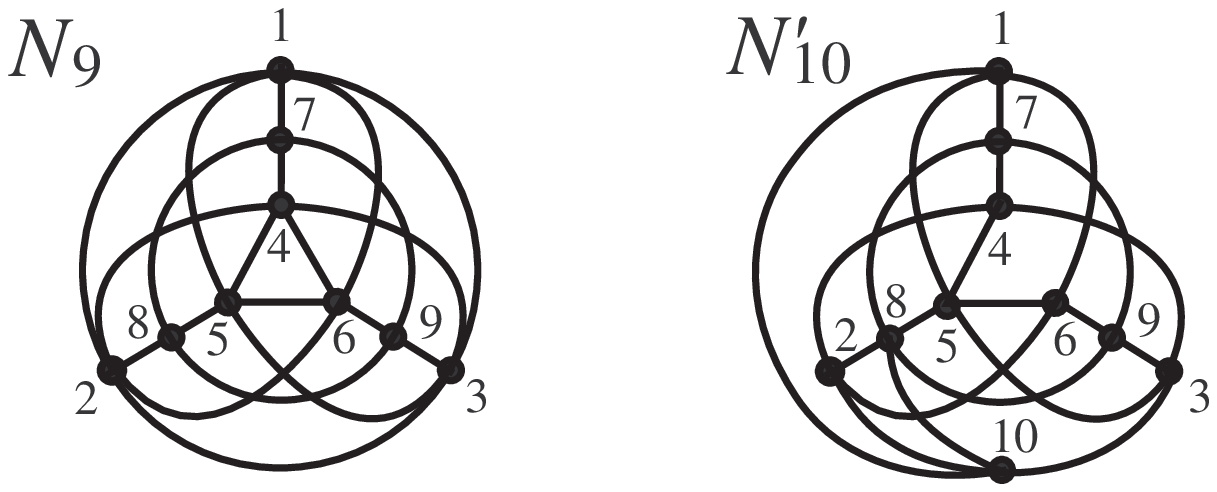}}
      \end{center}
   \caption{}
  \label{N9N10}
\end{figure} 

First we show in the case of $G=N_{9}$. Let $f$ be a spatial embedding of $N_{9}$. Note that $N_{9}$ contains $K_{6}$ as the proper minor 
\begin{eqnarray*}
\left(\left(\left(N_{9}-\overline{7\ 8}\right)-\overline{8\ 9}\right)-\overline{9\ 7}\right)/\overline{4\ 7}/\overline{5\ 8}/\overline{6\ 9}.
\end{eqnarray*}
Thus by Lemma \ref{CGTY}, there exists an element $\nu$ in $\Gamma^{(2)}(K_{6})$ such that ${\rm lk}\left(\psi_{N_{9},K_{6}}(f)(\nu)\right)$ is odd. Hence by Proposition \ref{map2}, there exists an element $\mu$ in $\Psi_{K_{6},N_{9}}^{(2)}\left(\Gamma^{(2)}(K_{6})\right)$ such that ${\rm lk}(f(\mu))$ is odd. Note that $\Psi_{K_{6},N_{9}}^{(2)}\left(\Gamma^{(2)}(K_{6})\right)$ consists of ten elements, and by the symmetry of $N_{9}$, we may assume that $\mu=[1\ 7\ 4\ 3]\cup [2\ 6\ 5\ 8]$ or $[1\ 2\ 3]\cup [4\ 5\ 6]$ without loss of generality. 

\vspace{0.2cm}
\noindent
{\bf Case 1.} $\mu=[1\ 7\ 4\ 3]\cup [2\ 6\ 5\ 8]$. 

\vspace{0.2cm}
Note that $N_{9}$ contains $P_{7}$ as the proper minor 
\begin{eqnarray*}
\left(\left(\left(\left(\left(N_{9}-\overline{6\ 1}\right)-\overline{6\ 2}\right)-\overline{6\ 4}\right)-\overline{6\ 5}\right)-\overline{6\ 9}\right)/\overline{3\ 9}. 
\end{eqnarray*}
Thus by Lemma \ref{CGTY}, there exists an element $\nu'$ in $\Gamma^{(2)}(P_{7})$ such that ${\rm lk}\left(\psi_{N_{9},P_{7}}(f)(\nu')\right)$ is odd. Hence by Proposition \ref{map2}, there exists an element $\mu'$ in $\Psi_{P_{7},N_{9}}^{(2)}\left(\Gamma^{(2)}(P_{7})\right)$ such that ${\rm lk}(f(\mu'))$ is odd. Note that $\Psi_{P_{7},N_{9}}^{(2)}\left(\Gamma^{(2)}(P_{7})\right)$ consists of nine elements 
\begin{eqnarray*}
&&\mu'_{1}= [3\ 4\ 5]\cup [1\ 2\ 8\ 7],\ 
\mu'_{2}= [1\ 5\ 4\ 7]\cup [2\ 3\ 9\ 8],\ 
\mu'_{3}= [2\ 8\ 5\ 4]\cup [3\ 1\ 7\ 9],\\
&&\mu'_{4}= [1\ 2\ 4\ 7]\cup [3\ 5\ 8\ 9],\ 
\mu'_{5}= [1\ 2\ 3]\cup [4\ 7\ 8\ 5],\ 
\mu'_{6}= [1\ 2\ 8\ 5]\cup [3\ 4\ 7\ 9],\\
&&\mu'_{7}= [2\ 3\ 4]\cup [1\ 5\ 8\ 7],\ 
\mu'_{8}= [7\ 8\ 9]\cup [1\ 2\ 4\ 5],\ 
\mu'_{9}= [1\ 5\ 3]\cup [2\ 8\ 7\ 4]. 
\end{eqnarray*}
For $i=1,2,\ldots,9$, let $J^{i}$ be the subgraph of $N_{9}$ which is $\mu\cup \mu'_{i}\cup \overline{6\ 9}$ if $i=3,6$ and $\mu\cup \mu'_{i}$ if $i\neq 3,6$. Assume that ${\rm lk}(f(\mu'_{i}))$ is odd for some $i\neq 8$. Then it can be easily seen that $J^{i}$ contains a graph $D^{i}$ as a minor so that $D^{i}$ is isomorphic to $D_{4}$ and $\left\{\mu,\mu'_{i}\right\}=\Psi_{D^{i},J^{i}}^{(2)}\left(\Gamma^{(2)}(D^{i})\right)$. Since both ${\rm lk}(f(\mu))$ and ${\rm lk}(f(\mu'_{i}))$ are odd, by Lemma \ref{D4type} there exists an element $\gamma$ in $\Gamma(J^{i})$ such that $a_{2}(f(\gamma))$ is odd. Next assume that ${\rm lk}(f(\mu'_{8}))$ is odd. We denote two elements $[7\ 8\ 9]\cup [1\ 2\ 6\ 5]$ and $[7\ 8\ 9]\cup [4\ 2\ 6\ 5]$ in $\Gamma^{(2)}(J^{8})$ by $\mu'_{8,1}$ and $\mu'_{8,2}$, respectively. We denote the subgraph $\mu\cup \mu'_{8,j}$ of $J^{8}$ by $J^{8,j}\ (j=1,2)$. Then it can be easily seen that $J^{8,j}$ contains a graph $D^{8,j}$ as a minor so that $D^{8,j}$ is isomorphic to $D_{4}$ and $\left\{\mu,\mu'_{8,j}\right\}=\Psi_{D^{8,j},J^{8,j}}^{(2)}\left(\Gamma^{(2)}(D^{8,j})\right)\ (j=1,2)$. Note that $[1\ 2\ 4\ 5] = [1\ 2\ 6\ 5] + [4\ 2\ 6\ 5]$ in $H_{1}(J^{8};{\mathbb Z}_{2})$, where $H_{*}(\cdot;{\mathbb Z}_{2})$ denotes the homology group with ${\mathbb Z}_{2}$-coefficients. Then, by the homological property of the linking number, we have that 
\begin{eqnarray*}
1 \equiv {\rm lk}(f(\mu'_{8})) \equiv {\rm lk}(f(\mu'_{8,1}))+{\rm lk}(f(\mu'_{8,2})) \pmod{2}. 
\end{eqnarray*}
Thus we see that ${\rm lk}(f(\mu'_{8,1}))$ is odd or ${\rm lk}(f(\mu'_{8,2}))$ is odd. In either case, by Lemma \ref{D4type} there exists an element $\gamma$ in $\Gamma(J^{8,j})$ such that $a_{2}(f(\gamma))$ is odd. 

\vspace{0.2cm}
\noindent
{\bf Case 2.} $\mu=[1\ 2\ 3]\cup [4\ 5\ 6]$. 

\vspace{0.2cm}
Note that $N_{9}$ contains $P_{9}$ as the proper minor 
\begin{eqnarray*}
\left(\left(\left(\left(\left(N_{9}-\overline{1\ 2}\right)-\overline{2\ 3}\right)-\overline{3\ 1}\right)-\overline{4\ 5}\right)-\overline{5\ 6}\right)-\overline{6\ 4}. 
\end{eqnarray*}
Thus by Lemma \ref{CGTY}, there exists an element $\nu'$ in $\Gamma^{(2)}(P_{9})$ such that ${\rm lk}\left(\psi_{N_{9},P_{9}}(f)(\nu')\right)$ is odd. Hence by Proposition \ref{map2}, there exists an element $\mu'$ in $\Psi_{P_{9},N_{9}}^{(2)}\left(\Gamma^{(2)}(P_{9})\right)$ such that ${\rm lk}(f(\mu'))$ is odd. Note that $\Psi_{P_{9},N_{9}}^{(2)}\left(\Gamma^{(2)}(P_{9})\right)$ consists of seven elements, and by the symmetry of $N_{9}$, we may assume that $\mu'=[1\ 5\ 8\ 7]\cup [2\ 6\ 9\ 3\ 4]$ or $[7\ 8\ 9]\cup [1\ 5\ 3\ 4\ 2\ 6]$ without loss of generality. We denote the subgraph $\mu\cup \mu'$ of $N_{9}$ by $J$. Assume that $\mu'=[1\ 5\ 8\ 7]\cup [2\ 6\ 9\ 3\ 4]$. We denote two elements $[1\ 5\ 8\ 7]\cup [4\ 3\ 2]$ and $[1\ 5\ 8\ 7]\cup [6\ 9\ 3\ 2]$ in $\Gamma^{(2)}(J)$ by $\mu'_{1}$ and $\mu'_{2}$, respectively. We denote the subgraph $\mu\cup \mu'_{i}$ of $J$ by $J^{i}\ (i=1,2)$. Then $J^{i}$ contains a graph $D^{i}$ as a minor so that $D^{i}$ is isomorphic to $D_{4}$ and $\left\{\mu,\mu'_{i}\right\}=\Psi_{D^{i},J^{i}}^{(2)}\left(\Gamma^{(2)}(D^{i})\right)\ (i=1,2)$. Since $[2\ 6\ 9\ 3\ 4] = [4\ 3\ 2] + [6\ 9\ 3\ 2]$ in $H_{1}(J;{\mathbb Z}_{2})$, it follows that 
\begin{eqnarray*}
1 \equiv {\rm lk}(f(\mu')) \equiv {\rm lk}(f(\mu'_{1}))+{\rm lk}(f(\mu'_{2})) \pmod{2}. 
\end{eqnarray*}
This implies that ${\rm lk}(f(\mu'_{1}))$ is odd or ${\rm lk}(f(\mu'_{2}))$ is odd. In either case, by Lemma \ref{D4type} there exists an element $\gamma$ in $\Gamma(J^{i})$ such that $a_{2}(f(\gamma))$ is odd. Next assume that $\mu'=[7\ 8\ 9]\cup [1\ 5\ 3\ 4\ 2\ 6]$. We denote four elements $[7\ 8\ 9]\cup [3\ 4\ 5]$, $[7\ 8\ 9]\cup [4\ 5\ 6]$, $[7\ 8\ 9]\cup [1\ 5\ 6]$ and $[7\ 8\ 9]\cup [2\ 4\ 6]$ in $\Gamma^{(2)}(J)$ by $\mu'_{1},\mu'_{2},\mu'_{3}$ and $\mu'_{4}$, respectively. Since $[1\ 5\ 3\ 4\ 2\ 6]= [3\ 4\ 5] + [4\ 5\ 6] + [1\ 5\ 6] + [2\ 4\ 6]$ in $H_{1}(J;{\mathbb Z}_{2})$, it follows that 
\begin{eqnarray*}
1\equiv {\rm lk}(\mu') 
\equiv {\rm lk}(\mu'_{1})+ {\rm lk}(\mu'_{2})+ {\rm lk}(\mu'_{3})+ {\rm lk}(\mu'_{4}) \pmod{2}.
\end{eqnarray*}
This implies that ${\rm lk}(\mu'_{i})$ is odd for some $i=1,2,3$ or $4$. Moreover, by the symmetry of $J$, we may assume that ${\rm lk}(\mu'_{1})$ is odd or ${\rm lk}(\mu'_{2})$ is odd without loss of generality. Assume that ${\rm lk}(\mu'_{1})$ is odd. We denote the subgraph $\mu \cup \mu'_{1}\cup \overline{1\ 7}\cup \overline{6\ 9}$ of $N_{9}$ by $J^{1}$. Then $J^{1}$ contains a graph $D^{1}$ as a minor so that $D^{1}$ is isomorphic to $D_{4}$ and $\left\{\mu,\mu'_{1}\right\}=\Psi_{D^{1},J^{1}}^{(2)}\left(\Gamma^{(2)}(D^{1})\right)$. Since both ${\rm lk}(f(\mu))$ and ${\rm lk}(f(\mu'_{1}))$ are odd, by Lemma \ref{D4type} there exists an element $\gamma$ in $\Gamma(J^{1})$ such that $a_{2}(f(\gamma))$ is odd. Next assume that ${\rm lk}(\mu'_{2})$ is odd. We denote four elements $[7\ 8\ 9]\cup [1\ 2\ 6]$, $[7\ 8\ 9]\cup [1\ 2\ 3]$, $[7\ 8\ 9]\cup [2\ 3\ 4]$ and $[7\ 8\ 9]\cup [1\ 3\ 5]$ in $\Gamma^{(2)}(J)$ by $\mu'_{5},\mu'_{6},\mu'_{7}$ and $\mu'_{8}$, respectively. Since $[1\ 5\ 3\ 4\ 2\ 6]= [1\ 2\ 6] + [1\ 2\ 3] + [2\ 3\ 4] + [1\ 3\ 5]$ in $H_{1}(J;{\mathbb Z}_{2})$, we have that 
\begin{eqnarray*}
1\equiv {\rm lk}(\mu') 
\equiv {\rm lk}(\mu'_{5})+ {\rm lk}(\mu'_{6})+ {\rm lk}(\mu'_{7})+ {\rm lk}(\mu'_{8}) \pmod{2}.
\end{eqnarray*}
Thus we see that ${\rm lk}(\mu'_{i})$ is odd for some $i=5,6,7$ or $8$. Moreover, by the symmetry of $J$, we may assume that ${\rm lk}(\mu'_{5})$ is odd or ${\rm lk}(\mu'_{6})$ is odd without loss of generality. Assume that ${\rm lk}(\mu'_{5})$ is odd. We denote the subgraph $\mu \cup \mu'_{5}\cup \overline{4\ 7}\cup \overline{3\ 9}$ of $N_{9}$ by $J^{5}$. Then $J^{5}$ contains a graph $D^{5}$ as a minor so that $D^{5}$ is isomorphic to $D_{4}$ and $\left\{\mu,\mu'_{5}\right\}=\Psi_{D^{5},J^{5}}^{(2)}\left(\Gamma^{(2)}(D^{5})\right)$. Since both ${\rm lk}(f(\mu))$ and ${\rm lk}(f(\mu'_{5}))$ are odd, by Lemma \ref{D4type} there exists an element $\gamma$ in $\Gamma(J^{5})$ such that $a_{2}(f(\gamma))$ is odd. Finally, assume that ${\rm lk}(\mu'_{6})$ is odd. Let us consider the 3-component link $L=f([1\ 2\ 3]\cup [4\ 5\ 6]\cup [7\ 8\ 9])$. Since all $2$-component sublinks of $L$ are $f(\mu),f(\mu'_{2})$ and $f(\mu'_{6})$, each of the $2$-component sublinks of $L$ has an odd linking number. 

Next we show in the case of $G=N'_{10}$. Let $f$ be a spatial embedding of $N'_{10}$. Note that $N'_{10}$ contains $P_{7}$ as the proper minor 
\begin{eqnarray*}
\left(\left(\left(N'_{10}-\overline{7\ 8}\right)-\overline{8\ 9}\right)-\overline{9\ 7}\right)/\overline{4\ 7}/\overline{5\ 8}/\overline{6\ 9}.
\end{eqnarray*}
Thus by Lemma \ref{CGTY}, there exists an element $\nu$ in $\Gamma^{(2)}(P_{7})$ such that ${\rm lk}\left(\psi_{N'_{10},P_{7}}(f)(\nu)\right)$ is odd. Hence by Proposition \ref{map2}, there exists an element $\mu$ in $\Psi_{P_{7},N'_{10}}^{(2)}\left(\Gamma^{(2)}(P_{7})\right)$ such that ${\rm lk}(f(\mu))$ is odd. Note that $\Psi_{P_{7},N'_{10}}^{(2)}\left(\Gamma^{(2)}(P_{7})\right)$ consists of nine elements, and by the symmetry of $N'_{10}$, we may assume that $\mu=[1\ 7\ 4\ 5]\cup [2\ 10\ 3\ 9\ 6]$, $[2\ 4\ 5\ 8]\cup [1\ 10\ 3\ 9\ 6]$, $[3\ 10\ 8\ 5]\cup [1\ 6\ 2\ 4\ 7]$, $[3\ 4\ 5]\cup [1\ 10\ 2\ 6]$ or $[2\ 8\ 10]\cup [1\ 6\ 9\ 3\ 4\ 7]$ without loss of generality. 

\vspace{0.2cm}
\noindent
{\bf Case 1.} $\mu=[1\ 7\ 4\ 5]\cup [2\ 10\ 3\ 9\ 6]$. 

\vspace{0.2cm}
Note that $N'_{10}$ contains $P_{9}$ as the proper minor 
\begin{eqnarray*}
\left(\left(\left(\left(\left(N'_{10}-\overline{5\ 1}\right)-\overline{5\ 3}\right)-\overline{5\ 4}\right)-\overline{5\ 6}\right)-\overline{5\ 8}\right)-\overline{7\ 9}. 
\end{eqnarray*}
Thus by Lemma \ref{CGTY}, there exists an element $\nu'$ in $\Gamma^{(2)}(P_{9})$ such that ${\rm lk}\left(\psi_{N'_{10},P_{9}}(f)(\nu')\right)$ is odd. Hence by Proposition \ref{map2}, there exists an element $\mu'$ in $\Psi_{P_{9},N'_{10}}^{(2)}\left(\Gamma^{(2)}(P_{9})\right)$ such that ${\rm lk}(f(\mu'))$ is odd. Note that $\Psi_{P_{9},N'_{10}}^{(2)}\left(\Gamma^{(2)}(P_{9})\right)$ consists of seven elements 
\begin{eqnarray*}
&&\mu'_{1}= [3\ 10\ 8\ 9]\cup [1\ 6\ 2\ 4\ 7],\ 
\mu'_{2}= [1\ 7\ 8\ 10]\cup [2\ 4\ 3\ 9\ 6],\\ 
&& \mu'_{3}= [1\ 10\ 2\ 6]\cup [3\ 4\ 7\ 8\ 9],\ 
\mu'_{4}= [2\ 4\ 3\ 10]\cup [1\ 7\ 8\ 9\ 6],\\
&& \mu'_{5}= [2\ 4\ 7\ 8]\cup [1\ 10\ 3\ 9\ 6],\ 
\mu'_{6}= [2\ 8\ 9\ 6]\cup [1\ 10\ 3\ 4\ 7],\\
&&\mu'_{7}= [2\ 8\ 10]\cup [1\ 6\ 9\ 3\ 4\ 7]. 
\end{eqnarray*}
For $i=1,2,\ldots,7$, let $J^{i}$ be the subgraph of $N'_{10}$ which is $\mu\cup \mu'_{i}\cup \overline{5\ 8}$ if $i=1,6,7$ and $\mu\cup \mu'_{i}$ if $i= 2,3,4,5$. Assume that ${\rm lk}(f(\mu'_{i}))$ is odd for some $i$. Then $J^{i}$ contains a graph $D^{i}$ as a minor so that $D^{i}$ is isomorphic to $D_{4}$ and $\left\{\mu,\mu'_{i}\right\}=\Psi_{D^{i},J^{i}}^{(2)}\left(\Gamma^{(2)}(D^{i})\right)$. Since both ${\rm lk}(f(\mu))$ and ${\rm lk}(f(\mu'_{i}))$ are odd, by Lemma \ref{D4type} there exists an element $\gamma$ in $\Gamma(J^{i})$ such that $a_{2}(f(\gamma))$ is odd. 

\vspace{0.2cm}
\noindent
{\bf Case 2.} $\mu=[2\ 4\ 5\ 8]\cup [1\ 10\ 3\ 9\ 6]$. 

\vspace{0.2cm}
Note that $N'_{10}$ contains another $P_{9}$ as the proper minor 
\begin{eqnarray*}
\left(\left(\left(\left(\left(N'_{10}-\overline{8\ 2}\right)-\overline{8\ 5}\right)-\overline{8\ 7}\right)-\overline{8\ 9}\right)-\overline{8\ 10}\right)-\overline{3\ 4}. 
\end{eqnarray*}
Thus by Lemma \ref{CGTY}, there exists an element $\nu'$ in $\Gamma^{(2)}(P_{9})$ such that ${\rm lk}\left(\psi_{N'_{10},P_{9}}(f)(\nu')\right)$ is odd. Hence by Proposition \ref{map2}, there exists an element $\mu'$ in $\Psi_{P_{9},N'_{10}}^{(2)}\left(\Gamma^{(2)}(P_{9})\right)$ such that ${\rm lk}(f(\mu'))$ is odd. Note that $\Psi_{P_{9},N'_{10}}^{(2)}\left(\Gamma^{(2)}(P_{9})\right)$ consists of seven elements 
\begin{eqnarray*}
&&\mu'_{1}= [1\ 6\ 9\ 7]\cup [2\ 4\ 5\ 3\ 10],\ 
\mu'_{2}= [1\ 7\ 4\ 5]\cup [2\ 10\ 3\ 9\ 6],\\ 
&& \mu'_{3}= [3\ 5\ 6\ 9]\cup [1\ 10\ 2\ 4\ 7],\ 
\mu'_{4}= [1\ 5\ 3\ 10]\cup [2\ 4\ 7\ 9\ 6],\\
&& \mu'_{5}= [1\ 10\ 2\ 6]\cup [3\ 9\ 7\ 4\ 5],\ 
\mu'_{6}= [1\ 5\ 6]\cup [2\ 4\ 7\ 9\ 3\ 10],\\
&&\mu'_{7}= [2\ 4\ 5\ 6]\cup [1\ 10\ 3\ 9\ 7]. 
\end{eqnarray*}
For $i=1,2,\ldots,7$, let $J^{i}$ be the subgraph of $N'_{10}$ which is $\mu\cup \mu'_{i}\cup \overline{7\ 8}$ if $i=1,7$ and $\mu\cup \mu'_{i}$ if $i\neq 1,7$. Assume that ${\rm lk}(f(\mu'_{i}))$ is odd for some $i$. Then $J^{i}$ contains a graph $D^{i}$ as a minor so that $D^{i}$ is isomorphic to $D_{4}$ and $\left\{\mu,\mu'_{i}\right\}=\Psi_{D^{i},J^{i}}^{(2)}\left(\Gamma^{(2)}(D^{i})\right)$. Since both ${\rm lk}(f(\mu))$ and ${\rm lk}(f(\mu'_{i}))$ are odd, by Lemma \ref{D4type} there exists an element $\gamma$ in $\Gamma(J^{i})$ such that $a_{2}(f(\gamma))$ is odd. 

\vspace{0.2cm}
\noindent
{\bf Case 3.} $\mu=[3\ 10\ 8\ 5]\cup [1\ 6\ 2\ 4\ 7]$. 

\vspace{0.2cm}
Let $P_{9}$ be the proper minor of $N'_{10}$ and $\mu'_{i}$ the element in $\Psi_{P_{9},N'_{10}}^{(2)}\left(\Gamma^{(2)}(P_{9})\right)\ (i=1,2,\ldots,7)$ as in Case 2. For $i=1,2,\ldots,7$, let $J^{i}$ be the subgraph of $N'_{10}$ which is $\mu\cup \mu'_{i}\cup \overline{8\ 9}$ if $i=1,4$ and $\mu\cup \mu'_{i}$ if $i\neq 1,4$. Assume that ${\rm lk}(f(\mu'_{i}))$ is odd for some $i$. Then $J^{i}$ contains a graph $D^{i}$ as a minor so that $D^{i}$ is isomorphic to $D_{4}$ and $\left\{\mu,\mu'_{i}\right\}=\Psi_{D^{i},J^{i}}^{(2)}\left(\Gamma^{(2)}(D^{i})\right)$. Since both ${\rm lk}(f(\mu))$ and ${\rm lk}(f(\mu'_{i}))$ are odd, by Lemma \ref{D4type} there exists an element $\gamma$ in $\Gamma(J^{i})$ such that $a_{2}(f(\gamma))$ is odd. 

\vspace{0.2cm}
\noindent
{\bf Case 4.} $\mu=[3\ 4\ 5]\cup [1\ 10\ 2\ 6]$. 

\vspace{0.2cm}
Note that $N'_{10}$ contains another $P_{7}$ as the proper minor 
\begin{eqnarray*}
\left(\left(\left(N'_{10}-\overline{3\ 4}\right)-\overline{4\ 5}\right)-\overline{5\ 3}\right)/\overline{3\ 9}/\overline{4\ 7}/\overline{5\ 8}. 
\end{eqnarray*}
Thus by Lemma \ref{CGTY}, there exists an element $\nu'$ in $\Gamma^{(2)}(P_{7})$ such that ${\rm lk}\left(\psi_{N'_{10},P_{7}}(f)(\nu')\right)$ is odd. Hence by Proposition \ref{map2}, there exists an element $\mu'$ in $\Psi_{P_{7},N'_{10}}^{(2)}\left(\Gamma^{(2)}(P_{7})\right)$ such that ${\rm lk}(f(\mu'))$ is odd. Note that $\Psi_{P_{7},N'_{10}}^{(2)}\left(\Gamma^{(2)}(P_{7})\right)$ consists of nine elements 
\begin{eqnarray*}
&&\mu'_{1}= [5\ 6\ 9\ 8]\cup [1\ 10\ 2\ 4\ 7],\ 
\mu'_{2}= [3\ 10\ 8\ 9]\cup [1\ 6\ 2\ 4\ 7],\\ 
&& \mu'_{3}= [1\ 5\ 8\ 10]\cup [2\ 4\ 7\ 9\ 6],\ 
\mu'_{4}= [7\ 8\ 9]\cup [1\ 10\ 2\ 6],\\
&& \mu'_{5}= [2\ 8\ 10]\cup [1\ 6\ 9\ 7],\ 
\mu'_{6}= [2\ 8\ 5\ 6]\cup [1\ 10\ 3\ 9\ 7],\\
&&\mu'_{7}= [1\ 7\ 8\ 5]\cup [2\ 10\ 3\ 9\ 6],\ 
\mu'_{8}= [1\ 5\ 6]\cup [2\ 4\ 7\ 9\ 3\ 10],\\
&& \mu'_{9}= [2\ 4\ 7\ 8]\cup [1\ 10\ 3\ 9\ 6]. 
\end{eqnarray*}
For $i=1,2,\ldots,9$, let $J^{i}$ be the subgraph of $N'_{10}$ which is $\mu\cup \mu'_{5}\cup \overline{4\ 7}\cup \overline{5\ 8}$ if $i=5$ and $\mu\cup \mu'_{i}$ if $i\neq 5$. Assume that ${\rm lk}(f(\mu'_{i}))$ is odd for some $i\neq 4,8$. Then $J^{i}$ contains a graph $D^{i}$ as a minor so that $D^{i}$ is isomorphic to $D_{4}$ and $\left\{\mu,\mu'_{i}\right\}=\Psi_{D^{i},J^{i}}^{(2)}\left(\Gamma^{(2)}(D^{i})\right)$. Since both ${\rm lk}(f(\mu))$ and ${\rm lk}(f(\mu'_{i}))$ are odd, by Lemma \ref{D4type} there exists an element $\gamma$ in $\Gamma(J^{i})$ such that $a_{2}(f(\gamma))$ is odd. Next assume that ${\rm lk}(f(\mu'_{8}))$ is odd. We denote two elements $[1\ 5\ 6]\cup [2\ 4\ 3\ 10]$ and $[1\ 5\ 6]\cup [3\ 4\ 7\ 9]$ in $\Gamma^{(2)}(J^{8})$ by $\mu'_{8,1}$ and $\mu'_{8,2}$, respectively. We denote the subgraph $\mu\cup \mu'_{8,1}$ of $J^{8}$ by $J^{8,1}$ and the subgraph $\mu\cup \mu'_{8,2}\cup \overline{8\ 9}\cup \overline{8\ 10}$ of $N'_{10}$ by $J^{8,2}$. Then $J^{8,j}$ contains a graph $D^{8,j}$ as a minor so that $D^{8,j}$ is isomorphic to $D_{4}$ and $\left\{\mu,\mu'_{8,j}\right\}=\Psi_{D^{8,j},J^{8,j}}^{(2)}\left(\Gamma^{(2)}(D^{8,j})\right)\ (j=1,2)$. Since $[2\ 4\ 7\ 9\ 3\ 10] = [2\ 4\ 3\ 10] + [3\ 4\ 7\ 9]$ in $H_{1}(J^{8};{\mathbb Z}_{2})$, it follows that 
\begin{eqnarray*}
1 \equiv {\rm lk}(f(\mu'_{8})) \equiv {\rm lk}(f(\mu'_{8,1}))+{\rm lk}(f(\mu'_{8,2})) \pmod{2}. 
\end{eqnarray*}
This implies that ${\rm lk}(f(\mu'_{8,1}))$ is odd or ${\rm lk}(f(\mu'_{8,2}))$ is odd. In either case, by Lemma \ref{D4type} there exists an element $\gamma$ in $\Gamma(J^{8,j})$ such that $a_{2}(f(\gamma))$ is odd. Finally assume that ${\rm lk}(f(\mu'_{4}))$ is odd. Note that $N'_{10}$ contains another $P_{9}$ as the proper minor 
\begin{eqnarray*}
\left(\left(\left(\left(\left(N'_{10}-\overline{2\ 4}\right)-\overline{2\ 6}\right)-\overline{2\ 8}\right)-\overline{2\ 10}\right)-\overline{5\ 1}\right)-\overline{5\ 3}. 
\end{eqnarray*}
Thus by Lemma \ref{CGTY}, there exists an element $\nu''$ in $\Gamma^{(2)}(P_{9})$ such that ${\rm lk}\left(\psi_{N'_{10},P_{9}}(f)(\nu'')\right)$ is odd. Hence by Proposition \ref{map2}, there exists an element $\mu''$ in $\Psi_{P_{9},N'_{10}}^{(2)}\left(\Gamma^{(2)}(P_{9})\right)$ such that ${\rm lk}(f(\mu''))$ is odd. Note that $\Psi_{P_{9},N'_{10}}^{(2)}\left(\Gamma^{(2)}(P_{9})\right)$ consists of seven elements 
\begin{eqnarray*}
&&\mu''_{1}= [5\ 6\ 9\ 8]\cup [1\ 10\ 3\ 4\ 7],\ 
\mu''_{2}= [4\ 5\ 8\ 7]\cup [1\ 10\ 3\ 9\ 6],\\ 
&& \mu''_{3}= [1\ 7\ 8\ 10]\cup [3\ 4\ 5\ 6\ 9],\ 
\mu''_{4}= [3\ 10\ 8\ 9]\cup [1\ 7\ 4\ 5\ 6],\\
&& \mu''_{5}= [1\ 6\ 9\ 7]\cup [3\ 4\ 5\ 8\ 10],\ 
\mu''_{6}= [3\ 9\ 7\ 4]\cup [1\ 10\ 8\ 5\ 6],\\
&&\mu''_{7}= [7\ 8\ 9]\cup [1\ 10\ 3\ 4\ 5\ 6]. 
\end{eqnarray*}
For $j=1,2,\ldots,7$, let $J^{4,j}$ be the subgraph of $N'_{10}$ which is $\mu'_{4}\cup \mu''_{j}\cup \overline{2\ 4}$ if $j=2,6$ and $\mu'_{4}\cup \mu''_{j}$ if $j\neq 2,6$. Assume that ${\rm lk}(f(\mu''_{j}))$ is odd for some $j\neq 7$. Then $J^{4,j}$ contains a graph $D^{4,j}$ as a minor so that $D^{4,j}$ is isomorphic to $D_{4}$ and $\left\{\mu'_{4},\mu''_{i}\right\}=\Psi_{D^{4,j},J^{4,j}}^{(2)}\left(\Gamma^{(2)}(D^{4,j})\right)$. Since both ${\rm lk}(f(\mu'_{4}))$ and ${\rm lk}(f(\mu''_{j}))$ are odd, by Lemma \ref{D4type} there exists an element $\gamma$ in $\Gamma(J^{4,j})$ such that $a_{2}(f(\gamma))$ is odd. Next assume that ${\rm lk}(f(\mu''_{7}))$ is odd. We denote three elements $[7\ 8\ 9]\cup [1\ 5\ 3\ 10]$, $[7\ 8\ 9]\cup [1\ 5\ 6]$ and $[7\ 8\ 9]\cup [3\ 4\ 5]$ in $\Gamma^{(2)}(N'_{10})$ by $\mu''_{7,1}$, $\mu''_{7,2}$ and $\mu''_{7,3}$, respectively. We denote the subgraph $\mu\cup \mu''_{7,k}\cup \overline{4\ 7}\cup \overline{2\ 8}$ of $N'_{10}$ by $J^{4,7,k}\ (k=1,2)$. Then $J^{4,7,k}$ contains a graph $D^{4,7,k}$ as a minor so that $D^{4,7,k}$ is isomorphic to $D_{4}$ and $\left\{\mu,\mu''_{7,k}\right\}=\Psi_{D^{4,7,k},J^{4,7,k}}^{(2)}\left(\Gamma^{(2)}(D^{4,7,k})\right)\ (k=1,2)$. Since $[1\ 10\ 3\ 4\ 5\ 6] = [1\ 5\ 3\ 10] + [1\ 5\ 6] + [3\ 4\ 5]$ in $H_{1}(N'_{10};{\mathbb Z}_{2})$, it follows that 
\begin{eqnarray*}
1 \equiv {\rm lk}(f(\mu'_{7})) \equiv {\rm lk}(f(\mu''_{7,1}))+{\rm lk}(f(\mu''_{7,2}))+ {\rm lk}(f(\mu''_{7,3})) \pmod{2}. 
\end{eqnarray*}
This implies that ${\rm lk}(f(\mu''_{7,k}))$ is odd for some $k$. If ${\rm lk}(f(\mu''_{7,1}))$ is odd or ${\rm lk}(f(\mu''_{7,2}))$ is odd, then by Lemma \ref{D4type} there exists an element $\gamma$ in $\Gamma(J^{4,7,k})$ such that $a_{2}(f(\gamma))$ is odd. If ${\rm lk}(f(\mu''_{7,3}))$ is odd, let us consider the 3-component link $L=f([3\ 4\ 5]\cup [7\ 8\ 9]\cup [1\ 10\ 2\ 6])$. Since all $2$-component sublinks of $L$ are $f(\mu),f(\mu'_{4})$ and $f(\mu''_{7,3})$, each of the $2$-component sublinks of $L$ has an odd linking number. 

\vspace{0.2cm}
\noindent
{\bf Case 5.} $\mu=[2\ 8\ 10]\cup [1\ 6\ 9\ 3\ 4\ 7]$. 

\vspace{0.2cm}
We denote two elements $[2\ 8\ 10]\cup [1\ 6\ 9\ 7]$ and $[2\ 8\ 10]\cup [3\ 9\ 7\ 4]$ in $\Gamma^{(2)}(N'_{10})$ by $\mu_{1}$ and $\mu_{2}$, respectively. Since $[1\ 6\ 9\ 3\ 4\ 7] = [1\ 6\ 9\ 7] + [3\ 9\ 7\ 4]$ in $H_{1}(N'_{10};{\mathbb Z}_{2})$, it follows that 
\begin{eqnarray*}
1 \equiv {\rm lk}(f(\mu)) \equiv {\rm lk}(f(\mu_{1}))+{\rm lk}(f(\mu_{2})) \pmod{2}. 
\end{eqnarray*}
This implies that ${\rm lk}(f(\mu_{1}))$ is odd or ${\rm lk}(f(\mu_{2}))$ is odd. By the symmetry of $N'_{10}$, we may assume that ${\rm lk}(f(\mu_{1}))$ is odd. Note that $N'_{10}$ contains another $P_{7}$ as the proper minor 
\begin{eqnarray*}
\left(\left(\left(N'_{10}-\overline{2\ 8}\right)-\overline{8\ 10}\right)-\overline{10\ 2}\right)/\overline{2\ 6}/\overline{3\ 10}/\overline{5\ 8}. 
\end{eqnarray*}
Thus by Lemma \ref{CGTY}, there exists an element $\nu'$ in $\Gamma^{(2)}(P_{7})$ such that ${\rm lk}\left(\psi_{N'_{10},P_{7}}(f)(\nu')\right)$ is odd. Hence by Proposition \ref{map2}, there exists an element $\mu'$ in $\Psi_{P_{7},N'_{10}}^{(2)}\left(\Gamma^{(2)}(P_{7})\right)$ such that ${\rm lk}(f(\mu'))$ is odd. Note that $\Psi_{P_{7},N'_{10}}^{(2)}\left(\Gamma^{(2)}(P_{7})\right)$ consists of nine elements 
\begin{eqnarray*}
&&\mu'_{1}= [3\ 5\ 8\ 9]\cup [1\ 6\ 2\ 4\ 7],\ 
\mu'_{2}= [1\ 7\ 8\ 5]\cup [2\ 4\ 3\ 9\ 6],\\ 
&& \mu'_{3}= [1\ 5\ 6]\cup [3\ 9\ 7\ 4],\ 
\mu'_{4}= [3\ 4\ 5]\cup [1\ 6\ 9\ 7],\\
&& \mu'_{5}= [5\ 6\ 9\ 8]\cup [1\ 10\ 3\ 4\ 7],\ 
\mu'_{6}= [4\ 5\ 8\ 7]\cup [1\ 10\ 3\ 9\ 6],\\
&&\mu'_{7}= [1\ 5\ 3\ 10]\cup [2\ 4\ 7\ 9\ 6],\ 
\mu'_{8}= [2\ 4\ 5\ 6]\cup [1\ 10\ 3\ 9\ 7],\\
&& \mu'_{9}= [7\ 8\ 9]\cup [1\ 10\ 3\ 4\ 2\ 6]. 
\end{eqnarray*}
For $i=1,2,\ldots,9$, let $J^{i}$ be the subgraph of $N'_{10}$ which is $\mu_{1}\cup \mu'_{3}\cup \overline{3\ 10}\cup \overline{5\ 8}$ if $i=3$ and $\mu_{1}\cup \mu'_{i}$ if $i\neq 3$. Assume that ${\rm lk}(f(\mu'_{i}))$ is odd for some $i\neq 4,9$. Then $J^{i}$ contains a graph $D^{i}$ as a minor so that $D^{i}$ is isomorphic to $D_{4}$ and $\left\{\mu_{1},\mu'_{i}\right\}=\Psi_{D^{i},J^{i}}^{(2)}\left(\Gamma^{(2)}(D^{i})\right)$. Since both ${\rm lk}(f(\mu_{1}))$ and ${\rm lk}(f(\mu'_{i}))$ are odd, by Lemma \ref{D4type} there exists an element $\gamma$ in $\Gamma(J^{i})$ such that $a_{2}(f(\gamma))$ is odd. Next assume that ${\rm lk}(f(\mu'_{9}))$ is odd. We denote two elements $[7\ 8\ 9]\cup [1\ 6\ 2\ 10]$ and $[7\ 8\ 9]\cup [2\ 4\ 3\ 10]$ in $\Gamma^{(2)}(J^{9})$ by $\mu'_{9,1}$ and $\mu'_{9,2}$, respectively. We denote the subgraph $\mu_{1}\cup \mu'_{8,1}$ of $J^{9}$ by $J^{9,1}$ and the subgraph $\mu_{1}\cup \mu'_{9,2}\cup \overline{5\ 3}\cup \overline{5\ 1}$ of $N'_{10}$ by $J^{9,2}$. Then $J^{9,j}$ contains a graph $D^{9,j}$ as a minor so that $D^{9,j}$ is isomorphic to $D_{4}$ and $\left\{\mu_{1},\mu'_{9,j}\right\}=\Psi_{D^{9,j},J^{9,j}}^{(2)}\left(\Gamma^{(2)}(D^{9,j})\right)\ (j=1,2)$. Since $[1\ 10\ 3\ 4\ 2\ 6] = [1\ 6\ 2\ 10] + [2\ 4\ 3\ 10]$ in $H_{1}(J^{9};{\mathbb Z}_{2})$, it follows that 
\begin{eqnarray*}
1 \equiv {\rm lk}(f(\mu'_{9})) \equiv {\rm lk}(f(\mu'_{9,1}))+{\rm lk}(f(\mu'_{9,2})) \pmod{2}. 
\end{eqnarray*}
This implies that ${\rm lk}(f(\mu'_{9,1}))$ is odd or ${\rm lk}(f(\mu'_{9,2}))$ is odd. In either case, by Lemma \ref{D4type} there exists an element $\gamma$ in $\Gamma(J^{9,j})$ such that $a_{2}(f(\gamma))$ is odd. Finally assume that ${\rm lk}(f(\mu'_{4}))$ is odd. Note that $N'_{10}$ contains another $P_{9}$ as the proper minor 
\begin{eqnarray*}
\left(\left(\left(\left(\left(N'_{10}-\overline{6\ 1}\right)-\overline{6\ 2}\right)-\overline{6\ 5}\right)-\overline{6\ 9}\right)-\overline{8\ 7}\right)-\overline{8\ 10}. 
\end{eqnarray*}
Thus by Lemma \ref{CGTY}, there exists an element $\nu''$ in $\Gamma^{(2)}(P_{9})$ such that ${\rm lk}\left(\psi_{N'_{10},P_{9}}(f)(\nu'')\right)$ is odd. Hence by Proposition \ref{map2}, there exists an element $\mu''$ in $\Psi_{P_{9},N'_{10}}^{(2)}\left(\Gamma^{(2)}(P_{9})\right)$ such that ${\rm lk}(f(\mu''))$ is odd. Note that $\Psi_{P_{9},N'_{10}}^{(2)}\left(\Gamma^{(2)}(P_{9})\right)$ consists of seven elements 
\begin{eqnarray*}
&&\mu''_{1}= [3\ 5\ 8\ 9]\cup [1\ 10\ 2\ 4\ 7],\ 
\mu''_{2}= [3\ 9\ 7\ 4]\cup [1\ 5\ 8\ 2\ 10],\\ 
&& \mu''_{3}= [1\ 7\ 4\ 5]\cup [2\ 8\ 9\ 3\ 10],\ 
\mu''_{4}= [2\ 4\ 5\ 8]\cup [1\ 10\ 3\ 9\ 7],\\
&& \mu''_{5}= [2\ 4\ 3\ 10]\cup [1\ 5\ 8\ 9\ 7],\ 
\mu''_{6}= [1\ 5\ 3\ 10]\cup [2\ 4\ 7\ 9\ 8],\\
&&\mu''_{7}= [3\ 4\ 5]\cup [1\ 10\ 2\ 8\ 9\ 7]. 
\end{eqnarray*}
For $j=1,2,\ldots,7$, let $J^{4,j}$ be the subgraph of $N'_{10}$ which is $\mu'_{4}\cup \mu''_{j}\cup \overline{2\ 6}$ if $j=4,5$ and $\mu'_{4}\cup \mu''_{j}$ if $j\neq 4,5$. Assume that ${\rm lk}(f(\mu''_{j}))$ is odd for some $j\neq 7$. Then $J^{4,j}$ contains a graph $D^{4,j}$ as a minor so that $D^{4,j}$ is isomorphic to $D_{4}$ and $\left\{\mu'_{4},\mu''_{i}\right\}=\Psi_{D^{4,j},J^{4,j}}^{(2)}\left(\Gamma^{(2)}(D^{4,j})\right)$. Since both ${\rm lk}(f(\mu'_{4}))$ and ${\rm lk}(f(\mu''_{j}))$ are odd, by Lemma \ref{D4type} there exists an element $\gamma$ in $\Gamma(J^{4,j})$ such that $a_{2}(f(\gamma))$ is odd. Next assume that ${\rm lk}(f(\mu''_{7}))$ is odd. We denote two elements $[3\ 4\ 5]\cup [1\ 10\ 8\ 9\ 7]$ and $[3\ 4\ 5]\cup [2\ 8\ 10]$ in $\Gamma^{(2)}(N'_{10})$ by $\mu''_{7,1}$ and $\mu''_{7,2}$, respectively. We denote the subgraph $\mu_{1}\cup \mu''_{7,1}\cup \overline{2\ 4}\cup \overline{5\ 6}$ of $N'_{10}$ by $J^{4,7}$. Then $J^{4,7}$ contains a graph $D^{4,7}$ as a minor so that $D^{4,7}$ is isomorphic to $D_{4}$ and $\left\{\mu_{1},\mu''_{7,1}\right\}=\Psi_{D^{4,7},J^{4,7}}^{(2)}\left(\Gamma^{(2)}(D^{4,7})\right)$. Since $[1\ 10\ 2\ 8\ 9\ 7] = [1\ 10\ 8\ 9\ 7] + [2\ 8\ 10]$ in $H_{1}(N'_{10};{\mathbb Z}_{2})$, it follows that 
\begin{eqnarray*}
1 \equiv {\rm lk}(f(\mu'_{7})) \equiv {\rm lk}(f(\mu''_{7,1}))+{\rm lk}(f(\mu''_{7,2})) \pmod{2}. 
\end{eqnarray*}
This implies that ${\rm lk}(f(\mu''_{7,1}))$ is odd or ${\rm lk}(f(\mu''_{7,2}))$ is odd. If ${\rm lk}(f(\mu''_{7,1}))$ is odd, then by Lemma \ref{D4type} there exists an element $\gamma$ in $\Gamma(J^{4,7})$ such that $a_{2}(f(\gamma))$ is odd. If ${\rm lk}(f(\mu''_{7,2}))$ is odd, let us consider the 3-component link $L=f([3\ 4\ 5]\cup [2\ 8\ 10]\cup [1\ 6\ 9\ 7])$. Since all $2$-component sublinks of $L$ are $f(\mu_{1}),f(\mu'_{4})$ and $f(\mu''_{7,2})$, each of the $2$-component sublinks of $L$ has an odd linking number. This completes the proof. 
\end{proof}

\begin{proof}[Proof of Theorem \ref{main2}.] 
Note that a graph in the Heawood family is obtained from one of $K_{7}$, $N_{9}$ and $N'_{10}$ by a finite sequence of $\triangle Y$-exchanges. Thus by Lemma \ref{DY}, Theorem \ref{N9FN} and the fact that $K_{7}$ is IK, namely I(K or C3L), it follows that every graph in the Heawood family is I(K or C3L). On the other hand, a graph in the Heawood family is obtained from one of $H_{12}$ and $C_{14}$ by a finite sequence of $Y \triangle$-exchanges. Since each of $H_{12}$ and $C_{14}$ is a minor-minimal IK graph and $\Gamma^{(3)}(H_{12})$ and $\Gamma^{(3)}(C_{14})$ are the empty sets, it follows that $H_{12}$ and $C_{14}$ are minor-minimal I(K or C3L) graphs. Thus by Lemma \ref{minor-min-lem}, we have the desired conslusion. 
\end{proof}

\begin{Remark}\label{apex}
A graph is said to be {\it $2$-apex} if it can be embedded in the $2$-sphere after the deletion of at most two vertices and all of their incidental edges. It is not hard to see that any $2$-apex graph may have a spatial embedding whose image contains neither a nontrivial knot nor a $3$-component link any of whose $2$-component sublink is nonsplittable. Thus any $2$-apex graph is not I(K or C3L). It is known that every graph of at most twenty edges is 2-apex \cite{M09} (see also \cite{JKM10}). Since the number of all edges of every graph in the Heawood family is twenty one, we see that any proper minor of a graph in the Heawood family is $2$-apex, namely not I(K or C3L). This also implies that any graph in the Heawood family is minor-minimal for I(K or C3L). 
\end{Remark}

\begin{Example}\label{not3L} 
Let $g_{9}$ be the spatial embedding of $N_{9}$ and $g'_{10}$ the spatial embedding of $N'_{10}$ as illustrated in Fig. \ref{no_3links}. Then it can be checked directly that both $g_{9}(N_{9})$ and $g'_{10}(N'_{10})$ do not contain a nonsplittable $3$-component link. Thus neither $N_{9}$ nor $N'_{10}$ is I3L. Moreover, we can see that $N_{10}, N_{11}, N'_{11}$ and $N'_{12}$ are not I3L in a similar way as the proof of Lemma \ref{main1lem}, see Fig. \ref{no_3links}. 
\end{Example}

\begin{figure}[htbp]
      \begin{center}
\scalebox{0.45}{\includegraphics*{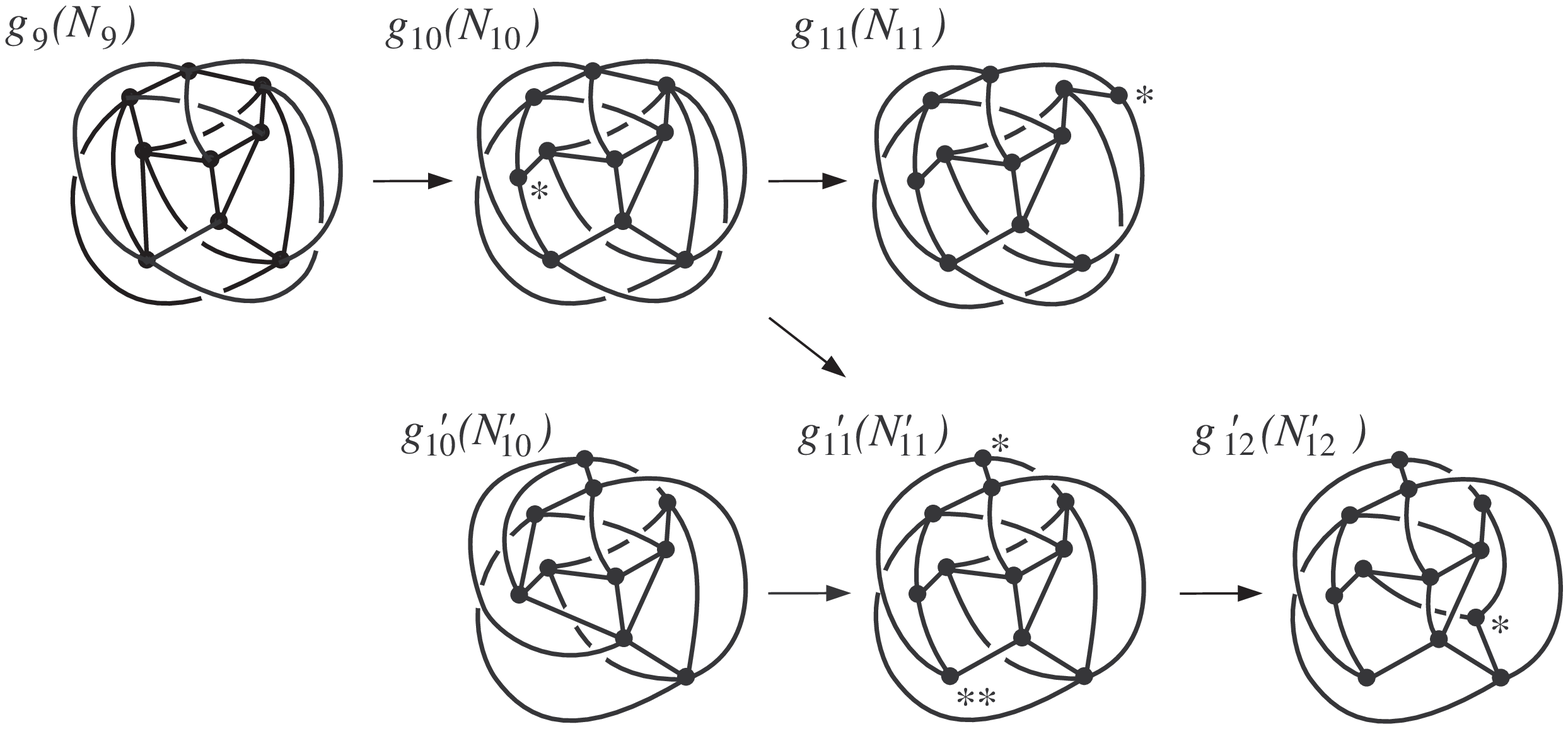}}
      \end{center}
   \caption{}
  \label{no_3links}
\end{figure} 

\begin{Remark}\label{Hea}
We remark that the Heawood graph is IK. The Heawood graph is the dual graph of $K_{7}$ which is embedded in a torus. It is known that there exists a unique graph $C_{14}$ obtained from $K_{7}$ by seven times applications of $\triangle Y$-exchanges \cite{KS92}. The seven triangles corresponds to the black triangles of black-and-white coloring of the torus by $K_{7}$. Then $C_{14}$ and $H$ are mapped to each other by parallel transformation of the torus, see Fig. \ref{torus}. Thus they are isomorphic. Since $C_{14}$ is IK, we have the result. 
\end{Remark}

\begin{figure}[htbp]
      \begin{center}
\scalebox{0.275}{\includegraphics*{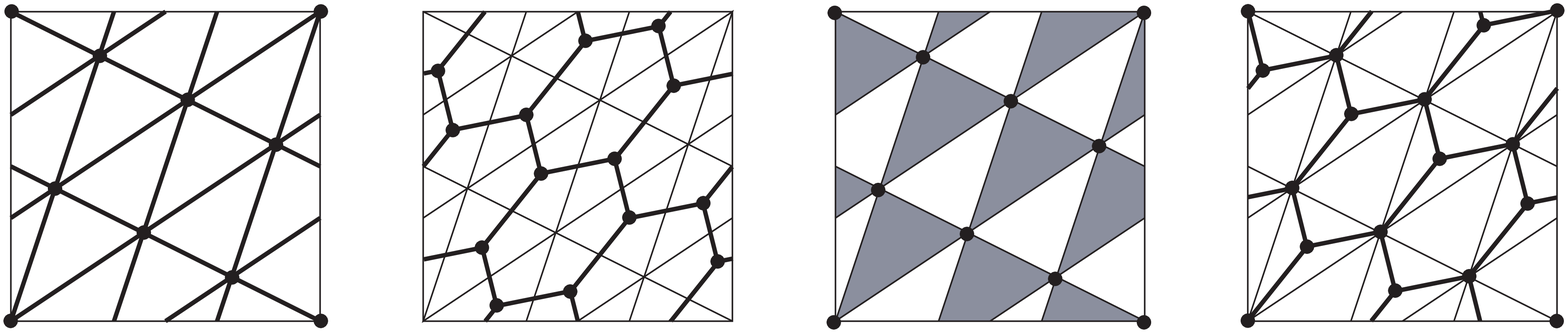}}
      \end{center}
   \caption{}
  \label{torus}
\end{figure} 

\begin{Remark}\label{K3311} 
{\rm 
It is known that all twenty six graphs obtained from the complete four-partite graph $K_{3,3,1,1}$ by a finite sequence of $\triangle Y$-exchanges are minor-minimal IK graphs \cite{KS92}, \cite{F02}. There exist thirty two graphs which are obtained from $K_{3,3,1,1}$ by a finite sequence of $\triangle Y$ and $Y \triangle$-exchanges but cannot be obtained from $K_{3,3,1,1}$ by a finite sequence of $\triangle Y$-exchanges. Recently, Goldberg-Mattman-Naimi announces that these thirty two graphs are also minor-minimal IK graphs \cite{GMN10}. 
}
\end{Remark}


%
{\normalsize
}

\end{document}